\pdfoutput=1
\documentclass[12pt]{amsart}
\usepackage[leqno]{amsmath}
\usepackage{listings,amssymb,graphicx,url,tikz,ctable,setspace,mathtools,float,makecell}
\usetikzlibrary{arrows.meta, positioning}
\usepackage{enumerate}
\usepackage[symbol]{footmisc}

\input{glyphtounicode}
\allowdisplaybreaks

\usepackage[colorlinks = true,
            linkcolor = blue,
            urlcolor  = red,
            citecolor = green,
            anchorcolor = blue]{hyperref}

\usepackage{perpage}
\MakePerPage{footnote}

\theoremstyle{definition}
\newtheorem*{Definition}{Definition}

\theoremstyle{plain}
\newtheorem{Theorem}{Theorem}[section]
\newtheorem{Lemma}[Theorem]{Lemma}
\newtheorem{Proposition}[Theorem]{Proposition}
\newtheorem{Corollary}[Theorem]{Corollary}

\theoremstyle{remark}
\newtheorem*{Remark}{Remark}

\newcommand{\C}{\mathbb{C}}
\newcommand{\Z}{\mathbb{Z}}
\newcommand{\N}{\mathbb{N}}
\newcommand{\Q}{\mathbb{Q}}

\DeclarePairedDelimiter{\abs}{\lvert}{\rvert}
\newcommand{\Frac}{\operatorname{Frac}}
\newcommand{\EDS}{\operatorname{EDS}}
\newcommand{\ES}{\operatorname{ES}}

\AtBeginDocument{%
  \mathchardef\mathcomma\mathcode`\,
  \mathcode`\,="8000 
}
{\catcode`,=\active
  \gdef,{\mathcomma\discretionary{}{}{}}
}

\calclayout

\title{On elliptic sequences over commutative rings}
\author{Junyan Xu}
\address{Heidelberg University, Germany.}
\email{junyanxu.math@gmail.com}
\begin{document}

\maketitle
\begin{abstract}
We define elliptic sequences over a commutative ring as sequences indexed by the (positive) integers satisfying a 4-parameter, highly symmetric family of homogeneous quartic relations among terms which we call elliptic relations.
We classify elliptic sequences over a field into three types, and show that most of them are dilated multiples of standard elliptic divisibility sequences (EDSs) which form countably many 4-dimensional families.
In particular, we show standard EDSs are elliptic in a purely algebraic way using intricate implications among elliptic relations, without relying on complex analytic theory of Weierstrass functions.
We shall use results presented here to give a purely algebraic treatment of division polynomials in a follow-up paper.
\end{abstract}

\section{Introduction} \label{intro}

For the purpose of this paper, an \textbf{elliptic sequence} is a sequence $h = (h_n)_{n>0}$ indexed by the positive integers $\Z^+$ with terms in a commutative ring $R$ satisfying the \textbf{elliptic relations}
\begin{equation}
    E(a,b,c,d): h_{a+b}h_{a-b}h_{c+d}h_{c-d}=h_{a+c}h_{a-c}h_{b+d}h_{b-d}-h_{b+c}h_{b-c}h_{a+d}h_{a-d} \label{ell_rel4} 
\end{equation}
for all integers $a>b>c>d\ge0$. These sequences are closely connected to elliptic curves (hence the name) and have been studied (mainly for $R=\Z$) ever since Ward's 1948 \textit{Memoir on Elliptic Divisibility Sequences} \cite{Ward}. It is also common to consider $\Z$-indexed elliptic sequences, for which $E(a,b,c,d)$ is required to hold for arbitrary $a,b,c,d\in\Z$, and every $\Z^+$-indexed elliptic sequence can be extended to a $\Z$-indexed one by setting $h_0=0$ and $h_n=-h_{-n}$ for $n<0$. The relations $E(a,b,c,d)$ are easily seen to be equivalent to the axiom for \textbf{elliptic nets} defined by Stange (formula (2) in \cite{Stange})\footnote{\cite{Stange} considers more generally elliptic nets indexed by an arbitrary free finitely-generated abelian group that is not necessarily $\Z$.}
\begin{equation}
  EN(p,q,r,s): h_{p + q + s} h_{p - q} h_{r + s} h_r
+ h_{q + r + s} h_{q - r} h_{p + s} h_p
+ h_{r + p + s} h_{r - p} h_{q + s} h_q = 0 \label{ell_net}
\end{equation}
via the transformation $(a,b,c,d) = \left(p+\frac{s}2, q+\frac{s}2, r+\frac{s}2, \frac{s}2\right)$\footnote{Since $s$ can be odd, it can happen that $a,b,c,d$ are all half-integers instead of integers, but in such cases we can still find integral $a',b',c',d'$ such that $E(a,b,c,d)$ is equivalent to $E(a',b',c',d')$, see the second paragraph of \S\ref{implication}}, or $(p,q,r,s)=(a-d,b-d,c-d,2d)$, assuming $h_{r-p}=-h_{p-r}$. The equivalence has been recognized in \cite{StangeFormulary}, \S1.2 and is attributed to Nelson Stephens there.
The identity $E(a,b,c,d)$ with $h$ replaced by the Weierstrass $\sigma$ function is called the ``three-term equation" and attributed to Weierstrass in \cite{WhittakerWatson}, Example 20.5.6, which holds for complex variables, not just integral indices.

The 4-parameter family $E(a,b,c,d)$ of elliptic relations is highly overdetermined, 
because a 1-parameter family is already sufficient to specify a sequence by recursion. Indeed, prior work used smaller families to define elliptic sequences, with slight variations in conventions, terminology, and generality.
The 2-parameter family
\begin{equation*}
E(m,n,1,0): h_{m+n}h_{m-n}h_1^2 = h_{m+1}h_{m-1}h_n^2 - h_{n+1}h_{n-1}h_m^2 
\end{equation*}
is used to define elliptic sequences in \cite{Ward}
and \cite{Swart}\footnote{In fact both papers uses inhomogeneous relations with the $h_1^2$ factor omitted, but as remarked in VII.23 there, elliptic sequences (according to his definition) with $h_1^2\ne1$  are uninteresting. \cite{Ward} and \cite{Swart} considered sequences of integers and rational numbers respectively.
\cite{Ward} considered sequences indexed by nonnegative integers $\N$ and $m\ge n\ge0$, which makes little difference (we can always set $h_0=0$), while \cite{Swart} considered $\Z$-indexed sequences.}
and elliptic divisibility sequences (EDSs) over a field in \cite{Silverman}, Exercise 3.34. The 3-parameter family
\begin{equation*}
E(m,n,r,0): h_{m+n}h_{m-n}h_{r}^{2} = h_{m+r}h_{m-r}h_{n}^{2} - h_{n+r}h_{n-r}h_{m}^{2} 
\end{equation*}
is used to define  elliptic sequences in Lean's Mathlib which is called ``generalised elliptic sequence" in \cite{Swart}, \S4.1.2\footnote{Both \cite{Swart} and \href{https://leanprover-community.github.io/mathlib4_docs/Mathlib/NumberTheory/EllipticDivisibilitySequence.html\#IsEllSequence}{Mathlib} use $\Z$-indexed sequences, with $m,n,r\in\Z$ arbitrary.}.
If $h_1$ is not a zero-divisor in $R$, the 2-parameter family implies the 3-parameter family,
because of the identity
\begin{equation}
h_1^2 E(m,n,r,0)=h_r^2 E(m,n,1,0)-h_n^2 E(m,r,1,0)+h_m^2 E(n,r,1,0). \label{ell_rel3_rel}
\end{equation}

Among all possible 1-parameter families of elliptic relations, two will be our subjects of study, namely the \textbf{even-odd recurrence}
\begin{align*}
E(n+1,n,1,0):h_{2n+1}h_1^2 &=h_{n+2}h_n^3-h_{n-1}h_{n+1}^3 & \text{for }n\ge 2 
\\
E(n+1,n-1,1,0):h_{2n}h_2h_1^2 &=h_n(h_{n+2}h_{n-1}^2-h_{n+1}^2h_{n-2}) & \text{for }n \ge 3 
\end{align*}
and the \textbf{Somos 4 recurrence}
\begin{align*}
     \qquad\qquad E(n,2,1,0):\quad h_{n+2}h_{n-2}h_1^2&=h_2^2h_{n+1}h_{n-1}-h_3h_1h_n^2 & \text{for }n\ge 3. 
\end{align*}
If $h_2 h_1$ is not a zero-divisor, the even-odd recurrence determines the whole elliptic sequence from four initial terms $h_1,h_2,h_3,h_4$, and the same is true for the Somos 4 recurrence if no $h_n$ is a zero-divisor. 

\begin{Remark}
The even-odd recurrence is traditionally used to define the division polynomials $\psi_n$ associated to an elliptic curve, a classical example of an EDS.
$E(a,b,c,d)$ is homogeneous of degree 4 and weighted homogeneous of degree $2(a^2+b^2+c^2+d^2)$ if $h_n$ is assigned weight $n^2$, and therefore also weighted homogeneous if $h_n$ is assigned weight $C_1 n^2+C_2$ for arbitrary constants $C_1$ and $C_2$. This is consistent with the division polynomial $\psi_n$ having weighted degree $n^2-1$. (Each $\psi_n$ is a polynomial in 7 variables $X,Y,a_1,a_2,a_3,a_4,a_6$ with weights $w(X)=2$, $w(Y)=3$ and $w(a_i)=i$.)
\end{Remark}

Our first major results in \S\ref{implication} shows that sequences defined by either of these two recurrences are elliptic, that is, either of the 1-parameter family implies the full 4-parameter family, provided the non-zero-divisor conditions.
These can be deduced for $R=\C$ by combining \cite{Ward}, Theorem 12.1\footnote{This assumes a 2-parameter family of relations, but the proof can be adapted to assume a 1-parameter family only.} with the aforementioned three-term equation for the Weierstrass $\sigma$ function, but
we shall present an elementary, purely algebraic proof valid for an arbitrary commutative ring $R$, which uses relations among elliptic relations like (\ref{ell_rel3_rel}) together with some transformation rules connecting equivalent elliptic relations.
Poorten and Swart \cite{PoortenSwart} was able to prove that the Somos 4 recurrence implies all $E(m,n,1,0)$ using a roundabout, albeit elementary argument, 
and \cite{ShipseySwart} claims before Theorem 6 that the method can be adapted to prove that the even-odd recurrence also implies all $E(m,n,1,0)$. These can then be combined with Corollary \ref{ell_rel_from_one_zero} to further derive all $E(a,b,c,d)$,
but our approach does not need this intermediate step and appears cleaner and clarifying. 

In \S\ref{std_EDS}, we show that given arbitrary elements $r_2,r_3,r_4\in R$, we can define an elliptic sequence $(h_n)_{n>0}$ such that $h_1=1$, $h_2=r_2$, $h_3=r_3$, $h_4=r_2 r_4$ and every $h_n$ is a polynomial in $r_2,r_3,r_4$. Moreover, $(h_n)_{n>0}$ is a \textbf{divisibility sequence}, meaning that $h_n\mid h_m$ whenever $n\mid m$. 
We denote it by $\EDS(r_2,r_3,r_4)$ and call it a standard elliptic divisibility sequence (EDS).
The definition of standard EDSs is implicit in 
the definition of division polynomials 
(especially \cite{Lercier}, Définition 8) 
and divisibility was already treated by Ward (\cite{Ward}, Theorem 4.1),
but we offer a streamlined
treatment valid over any commutative ring.

By homogeneity of the elliptic relations, we can further multiply every term of a standard EDS by an arbitrary $r_1\in R$ to obtain a 4-dimensional family of EDSs. Combined with the identity principle in \S\ref{id_principle} 
that says an elliptic sequence is determined by 4 initial terms if $h_2 h_1$ is not a zero-divisor, this suggests that a major irreducible component of the algebraic set of elliptic sequences is rational and 4-dimensional if $R$ is a field. Indeed, we completely classify elliptic sequences over a field in \S\ref{classification} and show they come in rational families of dimensions 4, 3, and 2 in a self-similar pattern and are all closely connected to standard EDSs. We also consider sequences satisfying 1-, 2-, and 3-parameter families of elliptic relations and show that the 3-parameter family implies being elliptic over a field. We end the paper with some initial investigation into the scheme of elliptic sequences, defined to be the spectrum of the polynomial ring with variables indexed by $\Z^+$ modulo all elliptic relations.

In \S\ref{translation_invariant}, we obtain a slight generalization to a ``translation invariant" found by Swart in \cite{Swart} and studied in some subsequent work. 

The work was motivated by the pursuit of an elementary, purely algebraic proof of the formula for the $n$th multiple of a point on an elliptic curve in terms of division polynomials (see \cite{Silverman}, Exercise 3.7) for formalization in Lean, which has been successful (joint work with David Kurniadi Angdinata) and will be the subject of a follow-up paper. 
Most results\footnote{The exceptions are Corollary \ref{ell_rel_from_one_zero}, Theorem \ref{ell_rel_from_Somos}, Proposition \ref{Somos_ext} and \ref{even_odd_ext}, Theorem \ref{neg_ext}, and \S\ref{classification}.} in this paper are included in the pull request to Lean's Mathlib, in the file $\textsf{EllipticDivisibilitySequence.lean}$
\footnote{\url{https://github.com/leanprover-community/mathlib4/pull/13782/changes\#diff-02d4668be95cd41b094c2ec5859acf1cf92c74833b9181a533bcce3a23c00961}}.

\section{Implications among Elliptic Relations} \label{implication}

Each of the three terms of an elliptic relation have four factors which are terms of the sequence $h$. We focus on the left-hand side of $E(a,b,c,d)$, whose four factors have indices $a\pm b,c\pm d$ respectively. There are three ways to partition the four factors into two pairs. In one of the partitions, the pair $a+b,c+d$ can be written as $a'\pm b'$ and the pair $a-b,c-d$ as $c'\pm d'$, with $(a',b',c',d'):=\left(\frac{a+b+c+d}2,\frac{a+b-c-d}2,\frac{a+c-b-d}2,\left|\frac{a+d-b-c}{2}\right|\right)$. Furthermore, this transformation also induces a different partition of the factors in both terms on the right-hand side of $E(a,b,c,d)$, leaving the product invariant, which shows that $E(a,b,c,d)$ is equivalent to $E(a',b',c',d')$, and we call this transformation \textbf{Rule I.1}. Similarly, from the $a+b,c-d\mid a-b,c+d$ partition, we see that $E(a,b,c,d)$ is equivalent to $E\left(\frac{a+b+c-d}2,\frac{a+b+d-c}2,\frac{a+c+d-b}2,\left|\frac{b+c+d-a}2\right|\right)$, which we call \textbf{Rule I.2}.

Notice that if there is an even number (0, 2, or 4) of odd numbers among $a,b,c,d$, then the indices remain integers after either transformation, but if there is an odd number (1 or 3) of odd numbers, then the indices become half-integers. Conversely, if $a,b,c,d$ are all half-integers, then $a\pm b$ have opposite parity, and so do $c\pm d$, so we can partition the four indices into a pair of even numbers $a'\pm b'$ and a pair of odd numbers $c'\pm d'$, which forces $a',b',c',d'\in\Z$. Therefore, every elliptic relation with half-integer indices is equivalent to one with integer indices, so an elliptic sequence also satisfies $E(a,b,c,d)$ for all half-integers $a,b,c,d$ (with the $a>b>c>d\ge0$ restriction for an $\Z^+$-indexed sequence). 

The absolute value sign in the last transformed parameter could be omitted if we consider $\Z$-indexed sequences $h$ satisfying $h_{-n}=-h_n$ for all $n\in\Z$. Under this condition, negating any number of the four parameters $a,b,c,d$ leaves the three terms of $E(a,b,c,d)$ invariant, and permuting the parameters permutes the terms up to sign, resulting in an equivalent relation in both cases, so an arbitrary elliptic relation $E(a,b,c,d)$ is equivalent to one with $a\ge b\ge c\ge d\ge 0$. If moreover $h_0=0$, then whenever two of the four parameters coincide, one of the terms in $E(a,b,c,d)$ is 0 while the other two terms cancel, so $E(a,b,c,d)$ trivially holds. Therefore, if $(h_n)_{n\in\Z}$ satisfies $h_0=0$, $h_{-n}=-h_n$, and $E(a,b,c,d)$ for all $a>b>c>d\ge0$, then it actually satisfies $E(a,b,c,d)$ for all $a,b,c,d\in\Z$ (and all $a,b,c,d\in\Z+\frac12$), i.e. it is elliptic. If we start with an $\Z^+$-indexed elliptic sequence, we can extend it to a $\Z$-indexed elliptic sequence by setting $h_0=0$ and $h_n=-h_{-n}$ for $n<0$.

\begin{Remark}
The lattice in $\mathbb{R}^4$ consisting of quadruples of integers and of half-integers is the dual $D_4^*$ of the $D_4$ lattice of quadruples of integers summing up to an even number; it is similar to $D_4$ itself via e.g. the map $(a,b,c,d)\mapsto (a+b,a-b,c+d,c-d)$ that scales the Euclidean distance by $\sqrt{2}$. The two I rules (ignoring the absolute value signs) 
are reflections along the root vectors $(\frac12,-\frac12,-\frac12,-\frac12)$ and $(\frac12,-\frac12,-\frac12,\frac12)$ in $D_4^*$ respectively, so they are elements of the Weyl group $W(D_4^*)$, but not $W(D_4)$. Both of the root systems $D_4$ and $D_4^*$ have 24 roots, and their disjoint union $F_4$ has 48. Both of the Weyl groups $W(D_4)$ and $W(D_4^*)$ have order 192 and are contained in $W(F_4)$ (order $6\times 192=1152$) which is the full symmetry group of $D_4$ or $D_4^*$ fixing the origin. See \cite{Brown} for more about connections between elliptic nets and $F_4$.
\end{Remark}

Having exhausted implications (equivalences) between two elliptic relations, we will next see how an elliptic relation could follow from multiple elliptic relations.
\textbf{Rule II.1} derives two relations from three: let $m,n,r,c,d$ be arbitrary (distinct) nonnegative integers or half-integers such that $E(m,n,c,d)$, $E(m,r,c,d)$ and $E(n,r,c,d)$ all hold, and assume $h_{c+d}h_{c-d}$ is not a zero divisor, then $E(m,n,r,c)$ and $E(m,n,r,d)$ also hold. The proof is by observing
$$h_{c+d}h_{c-d}E(m,n,r,c)=h_{r+c}h_{r-c}E(m,n,c,d)-h_{n+c}h_{n-c}E(m,r,c,d)+h_{m+c}h_{m-c}E(n,r,c,d)$$
and a similar identity with $E(m,n,r,d)$ in the left-hand side, of which (\ref{ell_rel3_rel}) is a special case. If we introduce the abbreviation
$$T(mn|abcd) := h_{m+n}h_{m-n}E(a,b,c,d),$$
these can be written as
\begin{align*}
T(cd|mnrc)&=T(rc|mncd)-T(nc|mrcd)+T(mc|nrcd) \\
T(cd|mnrd)&=T(rd|mncd)-T(nd|mrcd)+T(md|nrcd).
\end{align*}

\textbf{Rule II.2} also assumes $h_{c+d}h_{c-d}$ is not a zero divisor and derives one relation from ten by observing that
\begin{align*}
T(cd|mnrs)&=T(nd|mrsc)-T(rd|mnsc)+T(sd|mnrc)\\
&+T(nc|mrsd)-T(rc|mnsd)+T(sc|mnrd)\\
&+T(nr|mscd)-T(ns|mrcd)+T(rs|mncd)\\
&-2T(md|nrsc).
\end{align*}

These implications are all we need to derive all elliptic relations from a 1-parameter family.

\begin{Lemma}
\label{ell_rel_min_case}
Let $a_0\ge0$ be an integer or a half-integer, and define
\begin{align*}
    c_{\min}=1,\ d_{\min}=0&\text{ if }a_0\text{ is an integer, and}\\
    c_{\min}=\frac32,\ d_{\min}=\frac12&\text{ if }a_0\text{ is a half-integer}.
\end{align*}
If $h=(h_n)_{n>0}$ satisfies $E(a,b,c_{\min},d_{\min})$ whenever $a_0\ge a>b>c_{\min}$, and $h_{c_{\min}+d_{\min}}h_{c_{\min}-d_{\min}}$ is not a zero-divisor, then $h$ satisfies $E(a,b,c,d)$ whenever $a_0\ge a>b>c>d\ge 0$.
\end{Lemma}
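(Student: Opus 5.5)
The plan is to fix $(c,d)=(c_{\min},d_{\min})$ throughout and apply Rule II.1 and then Rule II.2 with this fixed pair. The hypothesis that $h_{c_{\min}+d_{\min}}h_{c_{\min}-d_{\min}}$ (that is, $h_1^2$ or $h_2h_1$) is not a zero-divisor is exactly what both rules require. I would first extend $h$ to a $\Z$-indexed sequence by $h_0=0$ and $h_{-n}=-h_n$. By the discussion after Rules I.1 and I.2, any $E(a,b,c,d)$ can then be reordered, up to sign, into one with $a\ge b\ge c\ge d\ge0$, and it holds trivially if two parameters coincide. So ordering and distinctness never cost anything. Throughout, the parameters range over integers when $a_0$ is an integer and over half-integers when $a_0$ is a half-integer.

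An important observation is that in all the identities of Rules II.1 and II.2, the elliptic relations involved have parameters drawn only from $\{m,n,r,s,c,d\}$. The multiplier factors $h_{x\pm y}$ in the $T(\cdot|\cdot)$ notation may have large indices, but they are just ring elements and impose no condition. Hence if the target relation has largest parameter at most $a_0$, so does every relation used to derive it, and the bound $a_0$ is preserved.

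\textbf{Step 1.} I would show $E(m,n,r,c_{\min})$ and $E(m,n,r,d_{\min})$ for all $a_0\ge m>n>r>c_{\min}$. Apply Rule II.1 to $m,n,r$ and $(c,d)=(c_{\min},d_{\min})$. The three inputs are $E(m,n,c_{\min},d_{\min})$, $E(m,r,c_{\min},d_{\min})$ and $E(n,r,c_{\min},d_{\min})$. All have first two parameters in the range $a_0\ge\cdot>\cdot>c_{\min}$, so they are hypotheses of the lemma. Cancelling the non-zero-divisor gives both conclusions.

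\textbf{Step 2.} Take a general target $E(a,b,c,d)$ with $a_0\ge a>b>c>d\ge0$. If $d\in\{c_{\min},d_{\min}\}$ or $c=c_{\min}$, it is covered by the hypothesis or by Step 1, after reordering. Note that $c>d\ge0$ forces $c\ge c_{\min}$ in both parity cases. Otherwise $d>c_{\min}$, so all four of $a,b,c,d$ exceed $c_{\min}$ and are distinct from $c_{\min}$ and $d_{\min}$. Then apply Rule II.2 with $(m,n,r,s)=(a,b,c,d)$ and the pair $(c_{\min},d_{\min})$. Each of the ten relations on the right contains $c_{\min}$ or $d_{\min}$ among its parameters. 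Those of the form $E(\cdot,\cdot,\cdot,c_{\min})$ or $E(\cdot,\cdot,\cdot,d_{\min})$ come from Step 1, and those of the form $E(\cdot,\cdot,c_{\min},d_{\min})$ are hypotheses. All of them have parameters bounded by $a$, after reordering. Cancelling $h_{c_{\min}+d_{\min}}h_{c_{\min}-d_{\min}}$ yields $E(a,b,c,d)$.

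I expect the main obstacle to be bookkeeping rather than ideas. The first point is the reordering of parameters: in Rule II.2 the outputs like $E(m,r,s,c_{\min})$ must be matched with the ordered form in Step 1, including signs, which the $\Z$-indexed extension handles. The second point is the case split on small $d$, which must exhaust everything in both the integer and half-integer settings. Finally, I would double-check that the half-integer case really needs only $h_2h_1$ nonzero-divisor, since $c_{\min}\pm d_{\min}=2,1$ there.
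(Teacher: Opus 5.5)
Your proposal is correct and follows the paper's proof essentially verbatim. Rule II.1 with the fixed pair $(c_{\min},d_{\min})$ handles $d\in\{c_{\min},d_{\min}\}$, Rule II.2 with $(m,n,r,s)=(a,b,c,d)$ handles $d>c_{\min}$, and the same case split exhausts all possibilities. The extension to a $\Z$-indexed sequence and the reordering are harmless but unnecessary: the relations fed into both rules already have strictly decreasing parameters, and the Rule II identities hold as formal polynomial identities.
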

\begin{proof}
For arbitrary $a,b,c$ satisfying $a_0\ge a>b>c>c_{\min}$, we derive $E(a,b,c,c_{\min})$ and $E(a,b,c,d_{\min})$ from $E(a,b,c_{\min},d_{\min})$, $E(a,c,c_{\min},d_{\min})$ and $E(b,c,c_{\min},d_{\min})$ using Rule II.1. For arbitrary $a,b,c,d$ satisfying $a_0\ge a>b>c>d>c_{\min}$, we derive $E(a,b,c,d)$ from $E(a,c,d,c_{\min})$, $E(a,b,d,c_{\min})$, $E(a,b,c,c_{\min})$, $E(a,c,d,d_{\min})$, $E(a,b,d,d_{\min})$, $E(a,b,c,d_{\min})$, $E(a,d,c_{\min},d_{\min})$, $E(a,c,c_{\min},d_{\min})$, $E(a,b,c_{\min},d_{\min})$, and $E(b,c,d,c_{\min})$ using Rule II.2. 
Both applications of Rule II requires the assumption that $h_{c_{\min}+d_{\min}}h_{c_{\min}-d_{\min}}$ is not a zero-divisor. 
Since the above two cases together with the 
assumption exhaust all possibilities of $(a,b,c,d)$ satisfying $a_0\ge a>b>c>d\ge 0$, the theorem is proved.
\end{proof}

\begin{Corollary}
\label{ell_rel_from_one_zero}
    If $h=(h_n)_{n>0}$ satisfies $E(m,n,1,0)$ for all integers $m>n>1$, and $h_1$ is not a zero-divisor, then $h$ is elliptic.
\end{Corollary}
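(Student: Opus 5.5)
The plan is to apply Lemma \ref{ell_rel_min_case} with $a_0$ an arbitrary integer and then let $a_0$ grow. For integer $a_0$ we have $c_{\min}=1$ and $d_{\min}=0$. The hypothesis of the lemma then reads: $E(a,b,1,0)$ holds whenever $a_0\ge a>b>1$, and $h_{1+0}h_{1-0}=h_1^2$ is not a zero-divisor.

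First, the non-zero-divisor condition follows from the assumption on $h_1$. If $h_1^2x=0$, then $h_1(h_1x)=0$, so $h_1x=0$ and hence $x=0$.

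Second, the relations $E(a,b,1,0)$ with $a>b>1$ are exactly the assumed relations $E(m,n,1,0)$ with $m>n>1$, restricted to $m\le a_0$. So the lemma gives $E(a,b,c,d)$ for all integers $a_0\ge a>b>c>d\ge0$.

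Third, given any integers $a>b>c>d\ge0$, I would take $a_0=a$ to obtain $E(a,b,c,d)$. Thus every elliptic relation with integer indices holds, which is the definition of $h$ being elliptic. By the discussion at the start of \S\ref{implication}, the half-integer relations then follow automatically, though they are not needed for the definition.

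I do not expect any real obstacle here. The only points to check are that the lemma's case $b>c_{\min}$ matches the corollary's $n>1$, and that squaring preserves the non-zero-divisor property. Both are immediate.
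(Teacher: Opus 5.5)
Your proposal is correct and matches the paper's proof: apply Lemma \ref{ell_rel_min_case} in the integer case with $c_{\min}=1$, $d_{\min}=0$, noting that $h_1^2$ is a non-zero-divisor and that the assumed $E(m,n,1,0)$ are exactly the lemma's hypotheses. Your explicit choice $a_0=a$ and your check that squaring preserves being a non-zero-divisor spell out details the paper leaves implicit.
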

\begin{proof}
We need to prove $E(a,b,c,d)$ for integers $a>b>c>d\ge0$, for which we have $c_{\min}=1$ and $d_{\min}=0$, so $h_{c_{\min}+d_{\min}}h_{c_{\min}-d_{\min}}=h_1^2$, which is not a zero-divisor since we assume that $h_1$ is not. Since we assume that $h$ satisfies $E(a,b,1,0)=E(a,b,c_{\min},d_{\min})$ for all $a>b>1$, applying Theorem \ref{ell_rel_min_case} we see that $h$ also satisfies $E(a,b,c,d)$.
\end{proof}

\begin{Theorem}
\label{ell_rel_from_even_odd}
If $h=(h_n)_{n>0}$ satisfies the even-odd recurrence, and $h_2h_1$ is not a zero-divisor, then $h$ is elliptic.
\end{Theorem}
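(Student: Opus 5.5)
We prove the theorem by strong induction on an upper bound $a_0$, which runs over the integers and half-integers in increasing order. The inductive claim is that $h$ satisfies $E(a,b,c,d)$ for all integers or half-integers with $a_0\ge a>b>c>d\ge0$. Half-integer relations are read through Rule I, so that they are always equivalent to integer ones.

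Lemma \ref{ell_rel_min_case} lets us reduce the inductive step to the ``minimal'' relations at the new top level. For integer $a_0=m$ these are $E(m,n,1,0)$ with $1<n<m$, which need $h_1$ to be a non-zero-divisor. For half-integer $a_0=m$ they are $E(m,n,\frac32,\frac12)$ with $\frac32<n<m$, which need $h_2h_1$ to be a non-zero-divisor. Both requirements hold under our hypothesis. So at each stage it suffices to produce these relations from the even-odd recurrence together with all relations whose top index is below $a_0$, which we get from the induction hypothesis via the lemma.

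For the step itself, we transform each needed minimal relation with Rules I.1 and I.2 into a relation of the same shape whose top index is smaller, or else equal to one of the even-odd recurrences. Take the half-integer case $E(m,n,\frac32,\frac12)$ with $m,n$ half-integers. Rule I.1 sends it to $E\bigl(\frac{m+n+2}{2},\frac{m+n-2}{2},\frac{m-n+1}{2},\frac{|m-n-1|}{2}\bigr)$. The new top index $\frac{m+n+2}{2}$ is strictly less than $m$ unless $n$ is close to $m$, namely unless $m-n\le 1$, i.e.\ $n=m-1$. Rule I.2 gives a similar reduction. So every minimal relation except those with $n$ adjacent to $m$ (differences $1$ or $2$, depending on parity) becomes a relation with smaller top index, which the induction hypothesis covers.

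The remaining boundary cases are handled directly. Integer $E(m,m-1,1,0)$ is the odd recurrence $E(n+1,n,1,0)$. Integer $E(m,m-2,1,0)$ is the even recurrence $E(n+1,n-1,1,0)$. The half-integer boundary cases $E(m,m-1,\frac32,\frac12)$ should, after Rule I, become $E(k+1,k,1,0)$ or $E(k+1,k-1,1,0)$ up to sign and reordering; we verify this by writing out the indices $m\pm n$ and $2,1$ and matching the three terms. The base cases are small $a_0$ (up to about $4$ or $\frac92$), where the needed relations either are among the recurrences themselves or hold trivially because two parameters coincide.

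The main obstacle is the bookkeeping in the inductive step. We must check, for every residue pattern, that the Rule I images have top index strictly below $a_0$ and still satisfy $a>b>c>d\ge0$ after reordering and absolute values. We must also confirm that the leftover cases are exactly the two recurrences. If some minimal relation at level $a_0$ maps only to another relation at level $a_0$ that is not minimal, we fall back on a second application of Lemma \ref{ell_rel_min_case} at the intermediate level. Failing that, we apply Rule II.1 with $c_{\min},d_{\min}$ swapped between the integer and half-integer systems, which requires exactly $h_2h_1$ to be a non-zero-divisor. We expect this second route to be what forces the hypothesis on $h_2$.
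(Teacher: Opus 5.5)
Your outline is the paper's proof: induction on the top index over integers and half-integers, Lemma \ref{ell_rel_min_case} to reduce to $E(a,b,c_{\min},d_{\min})$, Rule I to lower $a$ when $a\ge b+2$, and the case $a=b+1$ identified with a recurrence. In the integer case that is the odd recurrence. In the half-integer case Rule I.1 gives the even recurrence $E(b+\frac32,b-\frac12,1,0)$, with $b+\frac12\ge 3$.

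The one reduction you actually compute is wrong, though. The Rule I.1 image of $E(m,n,\frac32,\frac12)$ has top index $\frac{m+n+2}{2}$, which is $<m$ only when $m-n\ge 3$, not when $m-n\ge 2$. For $m-n=2$, Rule I.1 maps $E(m,m-2,\frac32,\frac12)$ to itself; for example $E(\frac{11}2,\frac72,\frac32,\frac12)$ is a fixed point. That case is not among the leftovers you treat directly, so relying on Rule I.1 leaves a hole in the half-integer step.

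What closes it is Rule I.2, which you mention but do not compute, and which the paper uses uniformly. Since $c_{\min}-d_{\min}=1$ in both the integer and half-integer cases, Rule I.2 sends $E(a,b,c_{\min},d_{\min})$ to a relation with top index $\frac{a+b+1}{2}$. This is strictly below $a$ exactly when $a\ge b+2$, and the new parameters are still strictly decreasing because $b>c_{\min}$. So in both cases the only leftover is $a=b+1$, and several of your extra steps fall away:
\begin{enumerate}[(i)]
\item the integer relation $E(m,m-2,1,0)$ needs no separate treatment, since it is lowered to $E(m-\frac12,m-\frac32,\frac32,\frac12)$;
\item no base cases are needed;
\item the fallbacks in your last paragraph (a second use of the Lemma, Rule II.1 with $c_{\min},d_{\min}$ swapped) are unnecessary.
\end{enumerate}
Finally, no fallback route is what forces the hypothesis on $h_2$. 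It enters only through the Lemma in the half-integer case, where $h_{c_{\min}+d_{\min}}h_{c_{\min}-d_{\min}}=h_2h_1$, as you noted yourself earlier.
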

\begin{proof}\footnote{This proof was first posted as \href{https://math.stackexchange.com/a/4903422/12932}{an answer on StackExchange}, using the same preparatory lemmas and implication rules.}
We prove that $\forall b\ \forall c\ \forall d\ (a>b>c>d\ge0 \to E(a,b,c,d))$ by induction on $a$, where $a,b,c,d$ are either all integers or all half-integers. By Theorem \ref{ell_rel_min_case} we only need to prove that $h$ satisfies $E(a,b,c_{\min},d_{\min})$, since $h_{c_{\min}+d_{\min}}h_{c_{\min}-d_{\min}}$, which is equal to $h_1^2$ in the integer case and $h_2h_1$ in the half-integer case, is not a zero-divisor by our assumption. In both cases we have $\frac{a+b+c-d}2=\frac{a+b+1}2\le a$ since $a > b$ implies $a\ge b+1$. If $a>b+1$, we apply Rule I.2 to transform $E(a,b,c,d)$ to $E\left(a',b',c',d'\right)$ which has a smaller first argument $a'=\frac{a+b+c-d}2<a$ and therefore is true by induction hypothesis. If $a=b+1$, in the integer case we have to prove $E(b+1,b,1,0)$ which is the odd recurrence, and in the half-integer case we apply Rule I.1 to show that $E(b+1,b,\frac32,\frac12)$ is equivalent to $E(b+\frac32,b-\frac12,1,0)$ which is the even recurrence.
\end{proof}

Allowing half-integers is crucial to this proof, because we cannot derive e.g. $E(6,2,1,0)$ from the even-odd recurrence by applying the rules only to elliptic relations with integer parameters.

Using similar techniques, we can also prove that the Somos 4 recurrence also implies all elliptic relations:
\begin{Theorem} \label{ell_rel_from_Somos}
    If $(h_n)_{n>0}$ satisfies $E(n,2,1,0)$ for $n\le a+b-2$, and $h_n$ is not a zero-divisor for any $n \le a+b-4$, then $h$ satisfies $E(a,b,c,d)$.
\end{Theorem}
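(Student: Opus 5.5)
We argue by strong induction on $a+b$, treating the integer and half-integer cases together as in the proof of Theorem \ref{ell_rel_from_even_odd}. We may assume $a>b>c>d\ge0$. The hypotheses are monotone in $a+b$: they only get weaker as $a+b$ decreases. So by induction we may assume $E(a',b',c',d')$ holds for every quadruple with $a'+b'<a+b$.

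\textbf{Step 1.} We reduce to the case $(c,d)=(c_{\min},d_{\min})$. Rule II.1 produces $E(a,b,c,c_{\min})$ and $E(a,b,c,d_{\min})$ from $E(a,b,c_{\min},d_{\min})$, $E(a,c,c_{\min},d_{\min})$ and $E(b,c,c_{\min},d_{\min})$. Rule II.2 then produces the general $E(a,b,c,d)$ from the ten relations listed in the proof of Lemma \ref{ell_rel_min_case}. In each listed relation the first two parameters sum to at most $a+b$. Those whose sum is strictly smaller hold by induction. The remaining ones are of the form $E(a,b,\cdot,\cdot)$, handled by the previous stage or by Step 2. The non-zero-divisor needed in these rules is $h_1^2$ or $h_2h_1$, and both are covered by the hypothesis once $a+b-4\ge 2$; the small cases are checked directly or are vacuous.

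\textbf{Step 2.} It remains to prove $E(a,b,1,0)$ in the integer case and $E(a,b,\tfrac32,\tfrac12)$ in the half-integer case. For the half-integer case, Rule I.1 or I.2 turns it into an integer relation. For example, $E(a,b,\tfrac32,\tfrac12)$ becomes $E\bigl(\tfrac{a+b}{2}+1,\tfrac{a+b}{2}-1,\tfrac{a-b+1}{2},\tfrac{a-b-1}{2}\bigr)$. This has the same first-two sum, but its second pair now differs by $1$. So it is enough to handle integer relations whose first two parameters have sum $a+b$.

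The core case is $E(m,n,1,0)$ with $m+n=a+b$, which we prove by downward induction on the gap $m-n$. If $n=2$, this is exactly the Somos 4 recurrence $E(a+b-2,2,1,0)$, which is assumed. If $n\ge3$, we apply Rule II.1 in the form of identity (\ref{ell_rel3_rel}), but with the triple $(m,n,2)$ and pair $(1,0)$ playing the roles of $(m,n,r)$ and $(c,d)$. Explicitly, we use
\[h_2^2E(m,n,1,0)=h_n^2E(m,2,1,0)-\dots,\]
obtained by permuting the roles in (\ref{ell_rel3_rel}). We choose the permutation so that $E(m,n,1,0)$ is the solved-for relation. The other relations involved, $E(m,2,1,0)$, $E(n,2,1,0)$ and $E(m,n,2,\cdot)$, have smaller first-two sum; $E(m,2,\cdot,\cdot)$ has $m+2<m+n$. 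The coefficient we divide by is then $h_n$ or $h_m$ or $h_{m\pm n}$-type terms rather than $h_1$. The right choice is the version of Rule II.1 with $(c,d)=(n,\text{small})$, whose divisor is $h_{n+d}h_{n-d}$ with $n+d\le a+b-4$. This is exactly why the hypothesis allows $h_k$ to be a non-zero-divisor for all $k\le a+b-4$.

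The main obstacle is this bookkeeping. We must pick, for each relation with first-two sum $a+b$, an instance of Rule II.1 or II.2 (possibly after Rule I transformations) that satisfies two conditions. First, every auxiliary relation must have strictly smaller first-two sum, or be the single Somos relation $E(a+b-2,2,1,0)$. Second, the divisor must be a product $h_kh_l$ with $k,l\le a+b-4$. The first thing we would try is to write the target, via Rule I, as $E(a',b',c',d')$ with $c'$ as large as possible. We then apply Rule II.1 with the pair $(c',d')$ replaced by $(2,1)$ or $(2,0)$, so that the divisor is $h_3h_1$ or $h_2^2$. The auxiliary relations then become $E(\cdot,\cdot,2,\cdot)$-type relations, which reduce to Somos instances $E(n,2,1,0)$ by Rule I. We then check that the index bound $n\le a+b-2$ is respected.
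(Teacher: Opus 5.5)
\textbf{There is a genuine gap.} Your Step 1 is sound. It is the reduction of Lemma \ref{ell_rel_min_case} run inside an induction on $a+b$, and together with Rule I it correctly reduces everything at level $S=a+b$ to the relations $E(m,n,1,0)$ with $m+n=S$, using only that $h_1$ and $h_2$ are non-zero-divisors. But this is the easy direction. The real content of the theorem is deriving the level-$S$ relations from the single new relation $E(S-2,2,1,0)$, and you never carry this out.

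The one concrete identity you give, (\ref{ell_rel3_rel}) with $r=2$ rearranged,
\[h_2^2E(m,n,1,0)=h_1^2E(m,n,2,0)+h_n^2E(m,2,1,0)-h_m^2E(n,2,1,0),\]
is circular. The relation $E(m,n,2,0)$ has first-two sum $m+n=S$, not a smaller sum as you claim, and nothing in your downward induction on the gap supplies it. For instance, to get $E(5,3,1,0)$ at $S=8$ you need $E(5,3,2,0)$. That relation is fixed by both Rule I.1 and Rule I.2, has the same gap, and its reduction via your Step 1 requires $E(5,3,1,0)$ itself.

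The remaining sentences are mutually inconsistent: dividing by $h_{n+d}h_{n-d}$, by $h_3h_1$, or by $h_2^2$. They end with ``the first thing we would try''. In particular, your explanation of why $h_k$ must be a non-zero-divisor for all $k\le a+b-4$ is not backed by any step, since nothing you actually do divides by anything other than $h_1$ and $h_2$.

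The missing idea is that, by Rule I.1, the Somos relation at level $S$ is the consecutive relation $E(d+3,d+2,d+1,d)$ with $2d+3=S-2$; this is a half-integer quadruple when $S$ is even. So one must move towards large $(c,d)$, the opposite direction from your Step 1. The paper orders level-$S$ relations by $a$ increasing, then $d$ decreasing, and argues case by case:
\begin{enumerate}[(i)]
\item If $c>d+1$, the Rule II.1 variant $T(ce|abcd)=T(cd|abce)+T(bc|aced)-T(ac|bced)$ with $e=d+1$ derives $E(a,b,c,d)$ from $E(a,b,c,d+1)$ and two lower-level relations, dividing by $h_{c+d+1}h_{c-d-1}$.
\item If $c=d+1$ and $b<a-1$, Rule I.2 moves to gap $1$, i.e.\ to a smaller $a$.
\item If $c=d+1$, $b=a-1$ and $c<b-1$, the variant with $c$ and $e$ swapped climbs to $E(a,b,c+1,c)$, dividing by $h_{2c+1}h_1$.
\item The chain terminates at $(d+3,d+2,d+1,d)$, which is $E(a+b-2,2,1,0)$.
\end{enumerate}
The bounds $c+d+1\le a+b-4$ and $2c+1\le a+b-4$ are exactly where the hypothesis on $h_k$ enters. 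Once this chain is available, in particular for $E(m,n,2,0)$, your identity would work, but it is no longer needed.
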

\begin{proof}
    By induction, we may assume that $E(a',b',c',d')$ has been proven for all $a'>b'>c'>d'\ge0$ such that either
\begin{enumerate}[(i)]
    \item $a'+b'<a+b$, or 
    \item $a'+b'=a+b$ and $a'<a$, or
    \item $a'+b'=a+b$ and $a'=a$ and $d'>d$.
\end{enumerate}
If $c > d+1$, we use a variant of Rule II.1
\[ T(ce|abcd)=T(cd|abce)+T(bc|aced)-T(ac|bced) \]
to derive $E(a,b,c,d)$ from $E(a,b,c,d+1)$, $E(a,c,d+1,d)$ and $E(b,c,d+1,d)$ which satisfy (iii), (i) and (i) respectively; $h_{c+(d+1)}h_{c-(d+1)}$ is not a zero-divisor because $c+(d+1)\le c+c-1\le (a-2)+(b-1)-1=a+b-4$. If $c=d+1$ but $b < a-1$, Rule I.2 transforms $E(a,b,c,d)$ to some $E(a',b',c',d')$ satisfying (ii). If $c=d+1$, $b=a-1$ and $c < b-1$, we use another variant of Rule II.1 (just swap $c$ and $e$ in the previous formula)
\[ T(ec|abcd)=T(cd|abec)+T(bc|aecd)-T(ac|becd) \]
to derive $E(a,b,c,d)$ from $E(a,b,c+1,c)$, $E(a,c+1,c,d)$ and $E(b,c+1,c,d)$ which satisfy (iii), (i) and (i) respectively; $h_{(c+1)+c}h_{(c+1)-c}$ is not a zero-divisor because $(c+1)+c=2(c+1)-1\le 2(b-1)-1=b+b-3\le (a-1)+b-3=a+b-4$. Finally, if $c=d+1$, $b=a-1$ and $c=b-1$, then $(a,b,c,d)=(d+3,d+2,d+1,d)$ and Rule I.1 transforms $E(a,b,c,d)$ to $E(2d+3,2,1,0)$, and we just need to verify $2d+3\le a+b-2$: in fact equality holds.
\end{proof}
We shall provide a simple alternative proof after we define standard EDSs in \S\ref{std_EDS} that uses Theorem \ref{ell_rel_from_even_odd} together with an identity principle to be introduced next.

\section{Identity Principles} \label{id_principle}

In this section we prove several identity principles which state that sequences satisfying certain families of elliptic relations are determined by certain initial terms.

\begin{Proposition} \label{Somos_ext}
    If $(h_n)_{n>0}$ and $(h'_n)_{n>0}$ are two sequences satisfying $E(n,2,1,0)$ for all $n\le m-2$ such that $h_n=h'_n$ for $1\le n\le 4$ and $h_n$ is not a zero-divisor for any $n\le m-4$, then $h_n=h'_n$ for $n\le m$.
\end{Proposition}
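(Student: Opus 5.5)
We argue by strong induction on $n$, showing $h_n=h'_n$ for all $n\le m$. The cases $n\le 4$ hold by hypothesis, so fix $5\le n\le m$ and assume $h_k=h'_k$ for all $k<n$. The recurrence to use is the Somos 4 relation $E(n-2,2,1,0)$:
\[
h_n h_{n-4} h_1^2 = h_2^2 h_{n-1}h_{n-3} - h_3 h_1 h_{n-2}^2 .
\]
It requires $n-2\ge 3$, which holds since $n\ge5$, and $n-2\le m-2$, which holds since $n\le m$. So both $h$ and $h'$ satisfy it.

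By the induction hypothesis the right-hand sides for $h$ and $h'$ coincide, because they only involve indices $1,2,3,n-1,n-2,n-3<n$. Likewise $h_{n-4}h_1^2=h'_{n-4}h'^2_1$. Subtracting the two relations therefore gives
\[
(h_n-h'_n)\,h_{n-4}h_1^2=0 .
\]

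It remains to cancel the factor $h_{n-4}h_1^2$. Since $n\le m$, we have $1\le n-4\le m-4$. Also $1\le m-4$ because $n\ge5$ forces $m\ge5$. Hence both $h_{n-4}$ and $h_1$ are non-zero-divisors by hypothesis, and a product of non-zero-divisors is again a non-zero-divisor. Thus $h_n=h'_n$, which completes the induction.

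There is no real obstacle here. The only point needing care is the index bookkeeping: for every $n$ from $5$ to $m$, the relation used must lie in the allowed range $n-2\le m-2$, and the cancelled factor must have index at most $m-4$. Both conditions hold exactly at the endpoint $n=m$, so the hypotheses of the proposition are sharp for this argument.
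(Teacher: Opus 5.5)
Your proof is correct and takes essentially the same approach as the paper: strong induction on $n$ using $E(n-2,2,1,0)$, whose left-hand side is $h_n h_{n-4}h_1^2$ and whose right-hand side involves only smaller indices, followed by cancelling the non-zero-divisor $h_{n-4}h_1^2$. Your index bookkeeping ($5\le n\le m$ gives $3\le n-2\le m-2$ and $1\le n-4\le m-4$) is exactly what is needed.
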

\begin{proof}
By induction on $n$. We may assume that $n>4$. The left-hand side of $E(n-2,2,1,0)$ for a sequence $h$ is $h_n h_{n-4}h_1^2$ and right-hand side involve terms with indices $<n$, which by induction hypothesis are the same for $h$ and $h'$. By assumption $h_{n-4}h_1^2=h'_{n-4}{h'_1}^2$ is not a zero-divisor, from which we conclude $h_n=h'_n$.
\end{proof}
The next theorem will be useful to classify elliptic sequences $h$ over a field with $h_2 h_1=0$.
\begin{Proposition} \label{even_odd_ext}
    If $r,s\in\Z^+$ are of opposite parity and $r < s$, and 
    $(h_n)_{n>0}$ and $(h'_n)_{n>0}$ are two sequences satisfying $E(2r+i,r+i,r,0)$ and $E(s+r+i,r+i,r,0)$ for all $i\in\Z^+$
    such that $h_n=h'_n$ for all $n\le 3r$ of the same parity as $r$ and all $n \le s+2r$ of the same parity as $s$ (which holds if $h_n=h'_n$ for all $n\le s+2r$ irrespective of parity), and $h_sh_r$ is not a zero-divisor, then $h=h'$.
\end{Proposition}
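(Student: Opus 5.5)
The plan is a strong induction on $n$ proving $h_n=h'_n$. I would sort the indices $n$ by parity, since $r$ and $s$ have opposite parity. The base cases are exactly the hypotheses: $h_n=h'_n$ for $n\le 3r$ with $n\equiv r\pmod 2$, and for $n\le s+2r$ with $n\equiv s\pmod 2$. In particular $h_r=h'_r$ and $h_s=h'_s$, because $r\le 3r$ and $s\le s+2r$ have the right parities.

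For the inductive step, take $n$ not covered by the hypotheses. Then either $n\equiv r\pmod 2$ with $n>3r$, so $n=3r+2i$ for some $i\in\Z^+$, or $n\equiv s\pmod 2$ with $n>s+2r$, so $n=s+2r+2i$ for some $i\in\Z^+$.

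In the first case I would write out $E(2r+i,r+i,r,0)$:
\[ h_{3r+2i}\,h_r\cdot h_r^2 = h_{3r+i}h_{r+i}\cdot h_{r+i}^2 - h_{2r+i}h_{i}\cdot h_{2r+i}^2 .\]
In the second case I would write out $E(s+r+i,r+i,r,0)$:
\[ h_{s+2r+2i}\,h_s\cdot h_r^2 = h_{s+2r+i}h_{s+i}\cdot h_{r+i}^2 - h_{2r+i}h_{i}\cdot h_{s+r+i}^2 .\]
In each identity the left-hand side is $h_n$ times a coefficient, $h_r^3$ or $h_sh_r^2$. Every index on the right-hand side lies in $[1,n-1]$, since $i\ge1$ and the largest right-hand index is $n-i$. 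So by the strong induction hypothesis the right-hand sides agree for $h$ and $h'$.

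The coefficients also agree, because $h_r=h'_r$ and $h_s=h'_s$. Each coefficient is a non-zero-divisor, since $h_sh_r$ is one and hence so are $h_r$ and $h_s$. The same two relations hold for $h'$, so subtracting gives $c\,(h_n-h'_n)=0$ with $c$ a non-zero-divisor, and therefore $h_n=h'_n$.

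The step I expect to need the most care is the index bookkeeping. I must check that every index appearing is positive ($i\ge1$ ensures $h_i$ is defined) and strictly less than $n$. I must also check that the two parity classes together with the initial ranges cover all of $\Z^+$, which holds because $r$ and $s$ have opposite parity. Beyond that the argument is the same cancellation as in Proposition \ref{Somos_ext}.
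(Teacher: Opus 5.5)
Your proposal is correct and follows the paper's proof. Both use strong induction on $n$, split by parity, and apply $E(2r+i,r+i,r,0)$ to determine $h_{3r+2i}$ and $E(s+r+i,r+i,r,0)$ to determine $h_{s+2r+2i}$, cancelling the non-zero-divisor coefficient $h_r^3$ or $h_sh_r^2$. Your index bookkeeping (all right-hand indices lie in $[1,n-i]$) fills in details the paper leaves implicit.
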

\begin{proof}
    We show by induction that $h_n=h'_n$ for all $n$; by induction, assume $h_m=h'_m$ for all $m<n$. If $n$ has the same parity as $r$, then $h_n=h'_n$ by assumption if $n\le 3r$, so assume $n>3r$ and write $n=3r+2i$.
    By $E(2r+i,r+i,r,0):h_{3r+2i}h_r^3=h_{3r+i}h_{r+i}^3-h_{2r+i}^3h_i$, we see that $h_{3r+2i}$ is determined by terms with smaller indices since $h_r$ is not a zero-divisor. Since $h$ and $h'$ both satisfy $E(2r+i,r+i,r,0)$ and agree on indices smaller than $n=3r+2i$, we conclude that $h_n=h_{3r+2i}=h'_{3r+2i}=h'_n$ as well. If $n$ has the same parity as $s$, we write $n=s+2r+2i$ and use $E(s+r+i,r+i,r,0):h_{s+2r+2i}h_sh_r^2=h_{s+2r+i}h_{s+i}h_{r+i}^2-h_{2r+i}h_{s+r+i}^2h_i$ instead.
\end{proof}
Taking $r=1$ and $s=2$ we immediately get
\begin{Corollary} \label{even_odd_ext_cor}
    If $(h_n)_{n>0}$ and $(h'_n)_{n>0}$ are two sequences satisfying the even-odd recurrence such that $h_n=h'_n$ for $1\le n\le 4$ and $h_2h_1$ is not a zero-divisor, then $h=h'$.
\end{Corollary}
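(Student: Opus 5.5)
This is the special case $r=1$, $s=2$ of Proposition~\ref{even_odd_ext}, so the plan is to check that the hypotheses of that proposition reduce to the present ones. The parity condition holds, since $1$ and $2$ have opposite parity and $1<2$. The non-zero-divisor hypothesis $h_sh_r=h_2h_1$ is exactly the assumption here.

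Next I match the families of relations. With $r=1$, the relation $E(2r+i,r+i,r,0)$ becomes $E(i+2,i+1,1,0)$ for $i\ge1$. Setting $n=i+1\ge2$, this is $E(n+1,n,1,0)$, which is the odd half of the even-odd recurrence. With $s=2$, the relation $E(s+r+i,r+i,r,0)$ becomes $E(i+3,i+1,1,0)$. Setting $n=i+2\ge3$, this is $E(n+1,n-1,1,0)$, which is the even half. Hence a sequence satisfying the even-odd recurrence satisfies both families required by Proposition~\ref{even_odd_ext}, for all $i\in\Z^+$.

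It remains to check the initial data. Proposition~\ref{even_odd_ext} requires agreement at all odd $n\le 3r=3$, that is $n=1,3$, and at all even $n\le s+2r=4$, that is $n=2,4$. Both are guaranteed by $h_n=h'_n$ for $1\le n\le4$, which is the statement's own parenthetical remark with $s+2r=4$. Proposition~\ref{even_odd_ext} then gives $h=h'$.

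The only step needing care is the index bookkeeping: the ranges $n\ge2$ and $n\ge3$ of the recurrence must correspond exactly to $i\ge1$ in each family. The computation above shows they do, so no real obstacle arises.
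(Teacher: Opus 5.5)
Your proposal is correct and matches the paper's own argument: the corollary is obtained by taking $r=1$, $s=2$ in Proposition~\ref{even_odd_ext}. Your index bookkeeping is right: the two families become $E(n+1,n,1,0)$ for $n\ge2$ and $E(n+1,n-1,1,0)$ for $n\ge3$, the initial data are $n=1,3$ and $n=2,4$, and $h_sh_r=h_2h_1$.
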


Finally we point out some elliptic relations that ensures $h_0=0$ and/or $h_{-n}=-h_n$ for a $\Z$-indexed sequence $h$. Firstly, $E(0,0,0,0)$ says $h_0^4=0$, which implies $h_0=0$ if $R$ is reduced (e.g. is a domain, or a field). More generally, $E(2n,n,n,0)$ says $h_{2n}^3 h_0=0$, so if moreover $h_{2n}$ is not a zero-divisor, then $h_0=0$. On the other hand, $E(a,b,c,0)+E(b,a,c,0)$ says $(h_{a-b}+h_{-(a-b)})h_{a+b}h_c^2=0$. If at least one odd term $h_c$ is not a zero-divisor, then $a-b=n$ and $a+b=c$ is solvable for odd $n\in\Z$, and we conclude that $h_{-n}=h_{-(a-b)}=-h_{a-b}=-h_n$ for all odd $n$. Similarly, if at least one even term is not a zero-divisor, then $h_{-n}=-h_n$ for all even $n$. If $R$ is a domain, then either all odd terms are zero, or there is a nonzero odd term, which is not a zero-divisor: in either case $h_{-n}=-h_n$ holds for all odd $n$, and similarly for all even $n$. In summary, we have

\begin{Theorem} \label{neg_ext}
    A sequence $(h_n)_{n\in\Z}$ in an integral domain satisfying $E(m,n,r,0)$ for all $m,n,r\in\Z$ also satisfy $h_0=0$ and $h_{-n}=-h_n$ for all $n\in\Z$.
\end{Theorem}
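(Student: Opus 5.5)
The argument just before the statement essentially contains the proof; I would organize it as follows.

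First, $h_0=0$. Taking $m=n=r=0$, the relation $E(0,0,0,0)$ reads $h_0^4=h_0^4-h_0^4$, i.e.\ $h_0^4=0$. In an integral domain this forces $h_0=0$.

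Next, antisymmetry. For arbitrary $a,b,c\in\Z$, I would add $E(a,b,c,0)$ and $E(b,a,c,0)$. The left-hand sides are $h_{a+b}h_{a-b}h_c^2$ and $h_{a+b}h_{b-a}h_c^2$. The right-hand sides are $h_{a+c}h_{a-c}h_b^2-h_{b+c}h_{b-c}h_a^2$ and its negative, so they cancel. The sum therefore gives
\[
(h_{a-b}+h_{b-a})\,h_{a+b}\,h_c^2=0 .
\]
Now fix $n$, and split according to the parity of $n$. Since $a-b=n$ and $a+b=c$ are solvable in integers exactly when $c\equiv n \pmod 2$, for every $c$ of the same parity as $n$ we may choose $a=(c+n)/2$ and $b=(c-n)/2$. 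This gives $(h_n+h_{-n})h_c^3=0$.

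Finally, the case distinction that uses the domain hypothesis. If some $h_c$ with $c\equiv n\pmod 2$ is nonzero, then $h_c^3\neq0$ and we can cancel it to get $h_{-n}=-h_n$. Otherwise every term with index of the same parity as $n$ vanishes. In particular $h_n=0$, and also $h_{-n}=0$, because $-n$ has the same parity as $n$. So $h_{-n}=-h_n$ holds trivially.

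No step is a real obstacle. The only care needed is in this last case split: the vanishing alternative must include the index $-n$ itself, which is why $c$ ranges over all of $\Z$, negatives included. The case $n=0$ is already covered, since $h_0=0$ gives $h_{-0}=-h_0$.
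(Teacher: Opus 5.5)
Your proof is correct and follows the paper's argument: $E(0,0,0,0)$ gives $h_0^4=0$, adding $E(a,b,c,0)$ and $E(b,a,c,0)$ gives $(h_{a-b}+h_{b-a})h_{a+b}h_c^2=0$, and the domain hypothesis enters through the same parity-class case split (either every term of that parity vanishes, or a nonzero one can be cancelled). Your remark that the vanishing case also covers $h_{-n}$, because $c$ ranges over all of $\Z$, makes explicit a point the paper leaves implicit.
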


\section{Standard Elliptic Divisibility Sequences}
\label{std_EDS}

As we saw in the previous section, both the even-odd recurrence and the Somos 4 recurrence determine a sequence subject to non-zero-divisor conditions. The even-odd recurrence only requires that $h_1$ and $h_2$ are not zero-divisors, and is what we will use to define standard elliptic divisibility sequences (EDSs)\footnote{\href{https://leanprover-community.github.io/mathlib4_docs/Mathlib/NumberTheory/EllipticDivisibilitySequence.html\#normEDS}{\textsf{normEDS}} in Mathlib.}. First we define $h_1=1$, then the even-odd recurrence becomes
\begin{align*}
h_{2n+1} &=h_{n+2}h_n^3-h_{n-1}h_{n+1}^3 & \text{for }n\ge 2, \\
h_{2n}h_2 &=h_n(h_{n+2}h_{n-1}^2-h_{n+1}^2h_{n-2}) & \text{for }n \ge 3.
\end{align*}
In order for the the second equation (the even recurrence) to have a solution, $h_2$ must divide $h_n(h_{n+2}h_{n-1}^2-h_{n+1}^2h_{n-2})$.
Since $h$ is supposed to be a divisibility sequence, we should have $h_2\mid h_m$ whenever $m$ is even; since every term in $h_n(h_{n+2}h_{n-1}^2-h_{n+1}^2h_{n-2})$ contains two factors with even subscripts and two with odd subscripts, if we know that $h_2\mid h_{2m}$ for $m < n$ then $h_2^2\mid h_n(h_{n+2}h_{n-1}^2-h_{n+1}^2h_{n-2})=h_2h_{2n}$, so not only a solution $h_{2n}$ exists, it can be chosen to be divisible by $h_2$.
From the base cases $h_2\mid h_2$ and $h_2\mid h_4$, this argument inductively shows that all $h_n$ are well-defined and $h_2\mid h_{2n}$, and inspires us to first define an auxiliary sequence\footnote{\href{https://leanprover-community.github.io/mathlib4_docs/Mathlib/NumberTheory/EllipticDivisibilitySequence.html\#preNormEDS}{\textsf{preNormEDS}} in Mathlib.} $\tilde{h}$ which is $h$ with a $h_2$ factor removed from the even terms. Indeed we can define $(\tilde{h}_n)_{n\in\Z}$ from initial values $\tilde{h}_3, \tilde{h}_4$ and an additional parameter $h_2\in R$ by the even-odd recurrence
\begin{align*}
    \tilde{h}_0 &= 0, \quad \tilde{h}_1=\tilde{h}_2=1 \\
    \tilde{h}_{2n+1} &= h_2^4\tilde{h}_{n+2}\tilde{h}_n^3-\tilde{h}_{n-1}\tilde{h}_{n+1}^3 &&\text{for }n\ge 2\text{ even} \\
    \tilde{h}_{2n+1} &= \tilde{h}_{n+2}\tilde{h}_n^3-h_2^4\tilde{h}_{n-1}\tilde{h}_{n+1}^3 &&\text{for }n\ge 3\text{ odd} \\
    \tilde{h}_{2n} &= \tilde{h}_n(\tilde{h}_{n+2}\tilde{h}_{n-1}^2-\tilde{h}_{n+1}^2\tilde{h}_{n-2}) &&\text{for }n\ge 3 \\
    \tilde{h}_n&=-\tilde{h}_{-n}&&\text{for }n<0.
\end{align*}
\begin{Definition} \label{std_EDS_def}
  The sequence $(h_n)_{n\in\Z}$ defined by $h_n=\tilde{h}_n$ for $n$ odd and $h_n=h_2\tilde{h}_n$ for $n$ even is called a standard EDS and denoted $\EDS(h_2,\tilde{h}_3,\tilde{h}_4)$.
\end{Definition}
This idea was already present in Lercier's doctoral thesis \cite{Lercier} (1997), which comes two years after the joint paper \cite{LercierMorain} and adjusted the definition of division polynomials $\psi_n$ (Définition 8 in \S2.1.2) to use $f_n$, which is our $\tilde{h}_n$ but with $h_2^4$ replaced by $F^2=(h_2^2-4W)^2$, 
so his $\psi_n$ only satisfies the elliptic relations modulo the Weierstrass polynomial $W$. The idea was independently rediscovered by Angdinata during his formalization of standard EDSs in Lean's Mathlib and communicated to the author.

By induction we see that all $\tilde{h}_n$, and therefore all terms of a standard EDS, are polynomials with integer coefficients in the parameters $h_2,\tilde{h}_3$ and $\tilde{h}_4$, and there is no need to require $h_2$ to be a non-zero-divisor. It is clear from the definition that a standard EDS satisfies the even-odd recurrence, which implies it is elliptic by Theorem \ref{ell_rel_from_even_odd} if $h_2$ is not a zero-divisor. This still holds if $h_2$ is a zero-divisor, but we have to resort to a ``reduction to the universal case" argument: every $h=\EDS(h_2,\tilde{h}_3,\tilde{h}_4)$ is a specialization of the \textbf{universal} (standard) \textbf{EDS} $h^U=\EDS(X_2,X_3,X_4)$ over $\Z[X_2,X_3,X_4]$, in the sense that $h^U_n$ evaluated at $(h_2,\tilde{h}_3,\tilde{h}_4)$ gives $h_n$. Since $h^U_2=X_2\ne 0$ is not a zero-divisor in the domain $\Z[X_2,X_3,X_4]$, $h^U$ is elliptic, so its specialization $h$ is also elliptic.
We can in fact show $h^U_n\ne0$ for $n\ne 0$ because $h^U$ specializes to $h=\EDS(2,3,2)$ which is the identity sequence $h_n=n$ by the identity principle Corollary \ref{even_odd_ext_cor}.

Next we show that every $\EDS(h_2,\tilde{h}_3,\tilde{h}_4)$ is a divisibility sequence, i.e. for all $m,n\in\Z$ we have $h_m\mid h_{nm}$. 
It suffices to prove this for $h=h^U$, since a witness for $h^U_m\mid h^U_{nm}$ specializes (evaluates) to a witness for $h_m\mid h_{nm}$. 
We first assume $n\ge0$ and proceed by induction on $n$. The $n=0$ case is trivial because $h_0=0$ and the $n=1$ case reduces to the trivial divisibility $h_m\mid h_m$. For $n=2$ we have
$h_{2n}=h_2\tilde{h}_{2n}=h_2\tilde{h}_{n}(\tilde{h}_{n-1}^2\tilde{h}_{n+2}-\tilde{h}_{n-2}\tilde{h}_{n+1}^2)$ and clearly $h_n\mid h_2\tilde{h}_n$ no matter $n$ is even or odd, so $h_n\mid h_{2n}$. If $n\ge 3$ is even, by induction we may assume $h_m\mid h_{(n/2)m}$ and by the $n=2$ case we conclude that $h_m\mid h_{(n/2)m}\mid h_{2(n/2)m}=h_{nm}$. If $n=2k+1\ge 3$ is odd and $m\ne0$, we make use the elliptic relation
\begin{equation*}
    E((k+1)m,km,1,0): h_{(2k+1)m}h_mh_1^2=h_{(k+1)m+1}h_{(k+1)m-1}h_{km}^2-h_{km+1}h_{km-1}h_{(k+1)m}^2,
\end{equation*}
which becomes
\begin{equation*}
  \frac{h_{(2k+1)m}}{h_m}=\frac{h_{(k+1)m+1}}{h_1}\frac{h_{(k+1)m-1}}{h_1}\left(\frac{h_{km}}{h_m}\right)^2-\frac{h_{km+1}}{h_1}\frac{h_{km-1}}{h_1}\left(\frac{h_{(k+1)m}}{h_m}\right)^2
\end{equation*}
after dividing both sides by $h_m^2h_1^2$. This is possible since $\Z[X_2,X_3,X_4]$ is a domain and $h^U_m\ne0$. Since $k\le k+1<2k+1$, by induction we have $h_m\mid h_{km}$ and $h_m\mid h_{(k+1)m}$, so the right-hand side lies in $\Z[X_2,X_3,X_4]$ 
and witnesses the divisibility $h_m\mid h_{(2k+1)m}$. 
(Notice that this proof recursively produces divisibility witnesses without using polynomial division.) We conclude that
\begin{Theorem}
\label{std_EDS_is_EDS}
    Every $\EDS(h_2,\tilde{h}_3,\tilde{h}_4)$ is both elliptic and a divisibility sequence, i.e. an EDS.
\end{Theorem}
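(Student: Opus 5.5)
The plan is to assemble the two properties from the preceding discussion, reducing both to the universal standard EDS $h^U=\EDS(X_2,X_3,X_4)$ over the domain $\Z[X_2,X_3,X_4]$, and then specializing.

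\textbf{Ellipticity.} First I check that $h^U$ satisfies the even-odd recurrence $E(n+1,n,1,0)$ and $E(n+1,n-1,1,0)$ with $h_1=1$. For the odd recurrence, substitute $h_m=\tilde h_m$ or $X_2\tilde h_m$ according to parity. Among $n,n+1,n+2,n-1$ exactly two indices are even. Thus each monomial $h_{n+2}h_n^3$ and $h_{n-1}h_{n+1}^3$ picks up a factor $X_2^4$ or $X_2^0$, depending on whether $n$ is even or odd. This reproduces exactly the two defining formulas for $\tilde h_{2n+1}$. For the even recurrence, $h_{2n}h_2=X_2^2\tilde h_{2n}$. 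The right-hand side $h_n(h_{n+2}h_{n-1}^2-h_{n+1}^2h_{n-2})$ has two even-indexed factors in each monomial, hence equals $X_2^2\tilde h_n(\cdots)$. The base cases $n\le 4$ hold by the choices $\tilde h_1=\tilde h_2=1$. Since $h^U_2h^U_1=X_2$ is a nonzero element of a domain, Theorem \ref{ell_rel_from_even_odd} shows $h^U$ is elliptic. The $\Z$-indexed version follows from $h_0=0$ and $h_{-n}=-h_n$, as discussed in \S\ref{implication}. Every elliptic relation is a polynomial identity in the $h_n$, so it survives evaluation at $(h_2,\tilde h_3,\tilde h_4)$, and every $\EDS(h_2,\tilde h_3,\tilde h_4)$ is elliptic.

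\textbf{Nonvanishing.} Evaluating $h^U$ at $(2,3,2)$ gives a sequence over $\Z$ that satisfies the even-odd recurrence and has initial terms $1,2,3,4$. The identity sequence $h_n=n$ is elliptic; one can verify $E(m,n,1,0)$ directly. By Corollary \ref{even_odd_ext_cor}, the two sequences coincide, since $h_2h_1=2$ is not a zero-divisor in $\Z$. Hence $h^U_n\ne0$ for $n\neq0$.

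\textbf{Divisibility.} I prove $h^U_m\mid h^U_{nm}$ by strong induction on $n\ge0$; negative $n$ follow from oddness, and $m=0$ is trivial. The cases $n=0,1$ are immediate. For $n=2$, use $h_{2m}=X_2\tilde h_m(\cdots)$ and note that $h_m$ divides $X_2\tilde h_m$ for either parity of $m$. For even $n$, chain the divisibility through $n/2$ and the case $n=2$. For odd $n=2k+1$, apply $E((k+1)m,km,1,0)$ and divide by $h_m^2$, which is legitimate because $h^U_m\neq0$ in a domain. This expresses $h_{(2k+1)m}/h_m$ as a polynomial combination of $h_{km}/h_m$ and $h_{(k+1)m}/h_m$, both of which are integral by induction. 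Specialization transports each witness to an arbitrary ring.

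The main obstacle is the ellipticity of $h$ when $h_2$ is a zero-divisor. The reduction to the universal case handles this, and the only real care needed is the parity bookkeeping of $X_2$ powers when verifying the recurrence.
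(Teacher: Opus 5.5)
Your proposal is correct and follows essentially the same route as the paper. You reduce to the universal EDS $\EDS(X_2,X_3,X_4)$ over $\Z[X_2,X_3,X_4]$, get ellipticity from Theorem \ref{ell_rel_from_even_odd} plus specialization, identify $\EDS(2,3,2)$ with $h_n=n$ via Corollary \ref{even_odd_ext_cor} to get nonvanishing, and prove divisibility by induction on $n$ (the duplication formula for even $n$, and $E((k+1)m,km,1,0)$ divided by $h_m^2$ for odd $n$); the only addition is that you spell out the parity bookkeeping of powers of $X_2$, which the paper calls clear from the definition.
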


A corollary of divisibility of standard EDSs is that they also satisfy $h_3h_2\mid h_{n+1}h_nh_{n-1}$. First notice that $h^U_3=X_3$ and $h^U_2=X_2$ both divide $h^U_6$ and $X_3$ is coprime to $X_2$, so it must be the case that $h^U_3h^U_2\mid h^U_6$ (or one may compute $h^U_6=X_2 X_3(X_2^4X_4-X_3^3-X_4^2)$), implying $h_3h_2\mid h_6$ for all standard EDSs $h$. This shows $h_3h_2\mid h_{n+1}h_nh_{n-1}$ if $n\equiv0,\pm1\pmod 6$. For the remaining cases, if $n\equiv \pm2\pmod 6$ then $h_3\mid h_{n\pm1}$ and $h_2\mid h_n$, while if $n\equiv 3\pmod 6$ then $h_3\mid h_n$ and $h_2$ divides both $h_{n+1}$ and $h_{n-1}$.

\begin{Theorem} \label{EDS_standard}
    If $(h_n)_{n>0}$ satisfies the even-odd recurrence, $h_2h_1$ is not a zero divisor, $h_1\mid h_2\mid h_4$ and $h_1\mid h_3$, then $h=h_1\EDS(h_2/h_1,h_3/h_1,h_4/h_2)$.
\end{Theorem}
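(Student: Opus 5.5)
We will prove the statement with the identity principle, Corollary \ref{even_odd_ext_cor}. Write $h_2=h_1u_2$, $h_3=h_1u_3$ and $h_4=h_2u_4$; the divisibility hypotheses $h_1\mid h_2$, $h_1\mid h_3$ and $h_2\mid h_4$ supply such elements $u_2,u_3,u_4\in R$. Put $g=\EDS(u_2,u_3,u_4)$ and $h'=h_1 g$, that is, $h'_n=h_1g_n$ for every $n$. We aim to show that $h'$ and $h$ both satisfy the even-odd recurrence, that they agree on $n=1,2,3,4$, and that $h_2h_1$ is not a zero-divisor. Corollary \ref{even_odd_ext_cor} then gives $h=h'$, which is the claimed identity.

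First we check the initial terms. By Definition \ref{std_EDS_def}, $g_1=1$, $g_2=u_2$, $g_3=u_3$ and $g_4=u_2u_4$. Hence $h'_1=h_1$, $h'_2=h_1u_2=h_2$, $h'_3=h_1u_3=h_3$ and $h'_4=h_1u_2u_4=h_2u_4=h_4$. The hypothesis that $h_2h_1$ is not a zero-divisor is given, so the non-degeneracy condition of the corollary holds.

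Next we show that $h'$ satisfies the even-odd recurrence. By Theorem \ref{std_EDS_is_EDS}, $g$ is elliptic; this holds without any condition on $u_2$, because of the reduction to the universal case. So $g$ satisfies every $E(a,b,c,d)$, and in particular $E(n+1,n,1,0)$ and $E(n+1,n-1,1,0)$. Each elliptic relation is homogeneous of degree $4$ in the terms of the sequence. Multiplying every term by $h_1$ therefore multiplies each of the three terms of the relation by $h_1^4$, so $h'=h_1g$ also satisfies both relations, i.e.\ the even-odd recurrence.

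Finally, note that the quotients $u_i$ need not be unique, since $h_1$ and $h_2$ are only known to be non-zero-divisors, not units. They are in fact unique: $h_2h_1$ not being a zero-divisor forces $h_1$ and $h_2$ each to be non-zero-divisors. So the notation $h_2/h_1$, $h_3/h_1$, $h_4/h_2$ is well-defined. This is routine, and there is no real obstacle in the argument: the heavy lifting has already been done by Theorem \ref{std_EDS_is_EDS} and the identity principle.
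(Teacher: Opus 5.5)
Your proof is correct and follows the same approach as the paper: both $h$ and $h_1\EDS(h_2/h_1,h_3/h_1,h_4/h_2)$ satisfy the even-odd recurrence, so the identity principle (Corollary \ref{even_odd_ext_cor}) reduces everything to the trivial check $h_n=h'_n$ for $1\le n\le 4$. Two minor remarks: you do not need Theorem \ref{std_EDS_is_EDS}, since the even-odd recurrence for $\EDS(u_2,u_3,u_4)$ follows directly from its recursive definition; and your last paragraph should simply say the quotients are unique because $h_1$ and $h_2$ are non-zero-divisors, rather than first claiming they need not be unique.
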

\begin{proof}
Both $h$ and $h':=h_1\EDS(h_2/h_1,h_3/h_1,h_4/h_2)$ satisfy the even-odd recurrence, and $h_2 h_1$ is not a zero-divisor, so by the identity principle Corollary \ref{even_odd_ext_cor}, it suffices to check that $h_n=h'_n$ for $1\le n\le 4$, which is trivial.
\end{proof}

\begin{Remark}
If $R$ is a field, then any nonzero element is not a zero-divisor and divides any element, so every elliptic sequence over $R$ with $h_2h_1\ne0$ is $h_1$ times a standard EDS. If $R$ is a domain, then it embeds into a field, so elliptic sequences $h$ over $R$ with $h_2h_1\ne0$ are also reasonably well understood, though it is still nontrivial to determine when an elliptic sequence in $\Frac(R)$ lies in $R$: for example, $\EDS(2,4,3/2)$ lies in $\Z$ (it is easiest to prove inductively that $2\mid h_n$ for $n\ge 5$) but $h_2=2\nmid h_4=3$, so it is only a divisibility sequence over $\Q$, not $\Z$; $\EDS(2,3,3/2)$ starts with $1,2,3,3,-3,-63/2$ so the first four terms but not all are in $\Z$.
\end{Remark}

We can now give another proof of Theorem \ref{ell_rel_from_Somos}:
\begin{proof}
If $a+b=5$ then $E(a,b,c,d)=E(3,2,1,0)$ and there is nothing to prove, so we assume $a+b\ge 6$, so that $a+b-4\ge 2$. By assumption, $h_1$ and $h_2$ are not zero-divisors. Consider the sequence $h':=h_1\EDS(h_2/h_1,h_3/h_1,h_4/h_2)$ over $\Frac(R)$\footnote{Here $\Frac(R)$ denotes the total ring of fractions, i.e. the localization of $R$ with respect to the submonoid of non-zero-divisors. Its key property is that the canonical ring homomorphism $R\to\Frac(R)$ is injective.}, which satisfies all elliptic relations $E(a,b,c,d)$ by Theorem \ref{std_EDS_is_EDS}, in particular all $E(n,2,1,0)$. By assumption $h_n$ is not a zero-divisor in $R$, and therefore not a zero-divisor in $\Frac(R)$, for $n\le a+b-4$, so we may apply the identity principle Proposition \ref{Somos_ext} to conclude that $h_n=h'_n$ for $n \le a+b$. Therefore, $h$ satisfies $E(a,b,c,d)$, because $h'$ satisfies it and the largest subscript in $E(a,b,c,d)$ is $a+b$.
\end{proof}

\section{Translation Invariant}
\label{translation_invariant}

An elliptic sequence $h$ with $h_1=1$ satisfies the Somos 4 recurrence
\begin{equation}
h_{n+2}h_{n-2}=Ah_{n+1}h_{n-1}-Bh_n^2 \label{Somos_general}
\end{equation}
with $A=h_2^2$ and $B=h_3$, and Swart observed in \cite{Swart} (see \S7.2.1, Remark 2 on p. 161--162) that $(h_{n+2}h_{n-1}^2+h_{n+1}^2h_{n-2}+Ah_n^3)/h_{n+1}h_nh_{n-1}$ is a \textbf{translation invariant}\footnote{See the Remarks following Theorem 1.2 
in \cite{HoneSwart} for literature review and the beginning of \S5 for some discussions.} independent of $n$. Indeed, multiplying a relation of the form
\begin{equation}
    C(h_{n+2}h_{n-1}^2+h_{n+1}^2h_{n-2}+Ah_n^3)=Dh_{n+1}h_nh_{n-1} \label{Somos_invariant}
\end{equation}
by $h_{n+2}$, we obtain
\begin{align*}
   Dh_{n+2}h_{n+1}h_nh_{n-1}&=C(h_{n+2}^2h_{n-1}^2+h_{n+2}h_{n-2}h_{n+1}^2+Ah_{n+2}h_n^3) \\
   &= C(h_{n+2}^2h_{n-1}^2+(Ah_{n+1}h_{n-1}-Bh_n^2)h_{n+1}^2+(h_{n+3}h_{n-1}+Bh_{n+1}^2)h_n^2) \\
   &= C(h_{n+3}h_n^2+h_{n+2}^2h_{n-1}+Ah_{n+1}^3)h_{n-1},
\end{align*}
where the second identity uses the Somos 4 recurrence (\ref{Somos_general}) twice, at $n$ and $n+1$. If $h_{n-1}$ is not a zero-divisor, we can cancel it from both sides and arrive at the same relation (\ref{Somos_invariant}) but with $n$ replaced by $n+1$. In general, if $n < m$ and $h_{n-1},h_n,\dots,h_{m-2}$ are not zero-divisors, it follows by induction
$$ (h_{n+2}h_{n-1}^2+h_{n+1}^2h_{n-2}+Ah_n^3)h_{m+1}h_mh_{m-1}=h_{n+1}h_nh_{n-1}(h_{m+2}h_{m-1}^2+h_{m+1}^2h_{m-2}+Ah_m^3). $$
Specializing to the universal EDS $h=h^U$ and $n=2$, and cancelling $h_3h_2$ from both sides, we obtain
\begin{equation*}
  h_{m+2}h_{m-1}^2+h_{m+1}^2h_{m-2}+h_2^2h_m^3 = (\tilde{h}_4+h_2^4)h_{m+1}h_mh_{m-1}/h_3,
\end{equation*}
which therefore still holds if $h$ is an arbitrary standard EDS.\footnote{Notice that $h^U_{m+1}h^U_m h^U_{m-1}/h_3^U$ is a well-defined polynomial in $\Z[X_2,X_3,X_4]$, since 3 divides one of the indices $m+1$, $m$, and $m-1$ and $h^U$ is a divisibility sequence. Evaluating it gives a definition of $h_{m+1}h_m h_{m-1}$ for arbitrary standard EDS $h$.}
This will be used in a follow-up paper to derive an expression for the polynomials $\omega_n$ associated to an elliptic curve (see \cite{Silverman}, Exercise 3.7) that continue to work in characteristic 2.

For an elliptic sequence $(h_n)_{n\in\Z}$, we can in fact prove the more general formula ${N(n,s)D(m,s)} = D(n,s)N(m,s)$ (which morally says that $N(n,s)/D(n,s)$ is a translation invariant) for all $m,n,s\in\Z$, where $N(n,s):=h_s^2(h_{n+2s}h_{n-s}^2+h_{n+s}^2 h_{n-2s})+h_{2s}^2 h_n^3$ and $D(n,s)=h_{n+s}h_n h_{n-s}$ using the linear combination
\begin{align*}
& h_m h_n h_{2s}^2 EN(m,n,s,0) \\
& - h_s^2(h_{m-s} h_{n-s} EN(m,n,s,s) + h_{m+s}h_{n+s} EN(m-s,n-s,s,s) - h_{m-n} h_{2s} EN(n+s,n,n-s,m-n)),
\end{align*}
and this does not require any term to be a non-zero-divisor.

\section{Classification of elliptic sequences over a field}
\label{classification}

The main goal of this last section of this paper
is to completely classify $\Z^+$-indexed sequences $h$ over a field satisfying all elliptic relations $E(a,b,c,d)$, but we also discuss sequences that only satisfy $E(m,n,r,0)$, $E(m,n,1,0)$, the even-odd recurrence or the Somos 4 recurrence, obtaining partial results. We will consider the following cases separately: (0) $h_1=0$; (100) $h_1\ne0$, $h_2=h_3=0$; (101) $h_1\ne 0$, $h_2=0$, $h_3\ne0$; (11) $h_1\ne 0$, $h_2\ne0$. Results are summarized in the following table:

\begin{table}[H]
\begin{tabular}{|c|cccc|}
\hline
    & \multicolumn{1}{c|}{Somos 4 rec.}                                & \multicolumn{1}{c|}{even-odd rec.}                    & \multicolumn{1}{c|}{$E(m,n,1,0)$} & \makecell{$E(m,n,r,0)$\\$E(a,b,c,d)$}         \\ \hline
0   & \multicolumn{1}{c|}{$h_2=0$ or $(\forall n) h_n h_{n+2}=0$} & \multicolumn{2}{c|}{$h_n h_{n+2}=0$}                                                 & Theorem \ref{classification_thm} \\ \hline
100 & \multicolumn{1}{c|}{$h_n h_{n+4}=0$}                      & \multicolumn{1}{c|}{TBD ($\infty$-dim. families)} & \multicolumn{2}{c|}{$A\cdot\ES_{1,\ell}(B,D)$}                                \\ \hline
101 & \multicolumn{4}{c|}{$A\cdot\EDS(0,C,0)$}                                                                                                                                                                   \\ \hline
11  & \multicolumn{1}{c|}{\S6.1--2}              & \multicolumn{3}{c|}{$A\cdot\EDS(B,C,D)$}                                                                                                       \\ \hline
\end{tabular}
\end{table}

In case (11), if $h$ satisfies the even-odd recurrence, then by Theorem \ref{EDS_standard} that $h=h_1\EDS(h_2/h_1,h_3/h_1,h_4/h_2)$ and therefore satisfies all $E(a,b,c,d)$. If $h$ satisfies the Somos 4 recurrence instead, we need to split into cases $h_3=0$ and $h_3\ne 0$, and we will discuss each in a subsections below.

In case (0), the Somos 4 recurrence $E(n,2,1,0)$ reduces to $h_{n+1}h_{n-1}h_2^2=0$, so either $h_2=0$ and there is no other constraints, or $h_2\ne 0$ and $h_n$ for $n\ge 3$ can be arbitrary as long as no two nonzero terms are exactly two indices apart from each other. For the even-odd recurrence, $E(n+1,n,1,0)$ reduces to $h_{n+2}h_n^3=h_{n+1}^3h_{n-1}$, which allows us to show $h_{n+2}h_n=0$ for all $n$: for $n=1$ this follows from $h_1=0$, and for $n>1$, $E(n+1,n,1,0)$ allows us to deduce $h_{n+2}h_n=0$ from 
$h_{n+1}h_{n-1}=0$. Therefore, nonzero terms cannot be exactly two indices apart from each other. $E(n+1,n-1,1,0)$ or more generally $E(m,n,1,0)$ does not add any constraint since it reduces to $h_{m+1}h_{m-1}h_n^2=h_{n+1}h_{n-1}h_m^2$ which is always true since both sides are zero. Imposing all $E(m,n,r,0)$ does introduce additional constraints and implies that the sequence is elliptic, and a complete classification will be stated in Theorem \ref{classification_thm} with a proof preceding it.

In case (101), from the even-odd recurrence we can prove by induction that $h_{2n}=0$ and $h_{2n+1}\ne0$ for all $n$: the $n\le 1$ cases are clear, so assume $n\ge 2$. Then $E(2n,2n-2,1,0)$ says $h_{2n-1}(h_{2n}^2h_{2n-3}-h_{2n+1}h_{2n-2}^2)=0$. Since $h_{2n-2}=0$ and $h_{2n-1}h_{2n-3}\ne0$ by induction hypothesis, we conclude that $h_{2n}=0$. Now in the right-hand side of $E(n+1,n,1,0):h_{2n+1}h_1^2=h_{n+2}h_n^3-h_{n+1}^3h_{n-1}$, every $h_k$ has $k<2n+1$, so by induction hypothesis exactly one term is nonzero depending on the parity of $n$, so $h_{2n+1}\ne0$. In fact we can prove by induction
\[ \frac{h_{2n+1}}{h_1} = (-1)^{n(n-1)/2}\left(\frac{h_3}{h_1}\right)^{n(n+1)/2},
\footnote{Compare Theorem 4.4.3 in \cite{Swart}, Theorem 23.1 in \cite{Ward}.}\]
and we can verify that $h/h_1$ agrees with
$\EDS(0,h_3/h_1,D)=\EDS(0,h_3/h_1,0)$, where $D$ is arbitrary, so $h$ in fact satisfies all elliptic relations.
The Somos 4 recurrence $E(n,2,1,0)$ reduces to $h_{n+2}h_{n-2}h_1=-h_n^2h_3$ in this case, which by induction again shows that $h=h_1\EDS(0,h_3/h_1,0)$ (for the even terms $h_{n-2}=0$ implies $h_n=0$, while for the odd terms $h_{n+2}$ can be computed from $h_n$ and $h_{n-2}$). Therefore either of the even-odd recurrence and the Somos 4 recurrence implies all $E(a,b,c,d)$ in this case.

In case (100), the Somos 4 recurrence simply says there are no nonzero terms exactly four indices apart. The even-odd recurrence is more restrictive, but still allows an infinite number of arbitrary terms: we may specify the even terms arbitrarily as long as we keep the nonzero terms at least four indices apart: from the ``odd recurrence" $E(n+1,n,1,0)$ we recursively compute that all odd terms except $h_1$ are zero, since each term in the right-hand side has two factors that are only two indices apart. Then the ``even recurrence" $E(n+1,n-1,1,0)$ is trivially satisfied since each term contains an odd term factor.\footnote{It is still an interesting problem to determine all constraints from the even-odd recurrence: by considering $E(n+1,n-1,1,0)$ for $8\le n\le 20$ we can derive constraints like $h_4h_6h_8=h_4h_6h_{12}=h_6h_8h_{12}=h_6h_8h_{10}h_{14}=h_6h_8h_{16}=h_8h_{10}h_{16}=h_6h_8h_{10}h_{18}=h_4h_6h_{16}=h_4h_6h_{18}=h_8h_{10}h_{12}h_{18}=h_8h_{10}h_{20}=h_{10}h_{12}h_{18}h_{20}=h_4h_6h_{20}h_{22}=0$, but further constraints might require more than one term.}
However, as soon as we impose all $E(m,n,1,0)$ (equivalently, all $E(a,b,c,d)$, by Corollary \ref{ell_rel_from_one_zero}, since $h_1\ne0$), the possibilities narrow down to (infinitely many) 3-parameter families\footnote{Compare Theorem 4.4.5 in \cite{Swart}, Theorem 25.1 in \cite{Ward}.}: for some odd $\ell>3$, 
  \begin{equation} \label{ES1}
    h_n=
    \begin{cases}
      -h_1 B^{k(k+1)/2}D^{k(k-1)/2}, & \text{if }n=k\ell-1 \\
      +h_1 B^{k(k-1)/2}D^{k(k+1)/2}, & \text{if }n=k\ell+1 \\
      0, & \text{if }n\not\equiv\pm1\pmod\ell
    \end{cases}
  \end{equation}
  where $B:=-h_{\ell-1}/h_1$ and $D:=h_{\ell+1}/h_1$.
These sequences are not multiples of standard EDSs, though if we take $\ell=3$ in case (iii) then it agrees with $h_1\EDS(h_{\ell-1}/h_1,0,h_{\ell+1}/h_{\ell-1})$, and the $\ell>3$ case is obtained by periodically inserting zero terms into this sequence. If both of $h_{\ell\pm1}$ are zero, then $h_1$ is the only nonzero term and the sequence agrees with $h_1\EDS(0,0,0)$, and if only one of them is zero, then the other together with $h_1$ are the only nonzero term. An $\Z^+$-indexed sequence like this with at most one nonzero even term and at most one nonzero odd term is always elliptic, since every term in every elliptic relation $E(a,b,c,d)$ with integers $a>b>c>d\ge0$ involves two factors with different indices of the same parity ($a\pm b$, $a\pm c$, or $b\pm c$).

To show that $h$ must be of the form (\ref{ES1}), consider the first nonzero term $h_{\ell-1}$ after $h_1$: if no such term exists we can take $B=D=0$ and $\ell$ arbitrary, and if one exists the index $\ell-1$ must be even: suppose not and there is no nonzero term between $h_1\ne0$ and $h_{2n+1}\ne0$, then $n\ge2$ since we assumed $h_3=0$, so $E(n+1,n,1,0)$ implies that $h_{n+2}h_n\ne0$ or $h_{n-1}h_{n+1}\ne0$, but both $n$ and $n+1$ are strictly between 1 and $2n+1$, contradiction. Since $h_{\ell-1}h_1\ne0$, by the identity principle Proposition \ref{even_odd_ext} with $r=1$ and $s=\ell-1$, we only need to show the identity (\ref{ES1}) for $n=1,3$ and even $n\le\ell+1$ (which is clear), and that any sequence defined by (\ref{ES1}) is elliptic (which is the $r=1$, $s=\ell-1$ case of the following theorem).
\begin{Theorem}
    Let $r,s\in\Z^+$ be of opposite parity and $r<s$, and let $A,B,D$ be elements in a commutative ring $R$. Then the sequence $(h_n)_{n\in\Z}$ defined by 
\begin{equation}
h_m=
    \begin{cases}
      -AB^{k(k+1)/2}D^{k(k-1)/2} & \text{if }m=k(r+s)-r \\
      +AB^{k(k-1)/2}D^{k(k+1)/2} & \text{if }m=k(r+s)+r \\
      0 & \text{if }m\not\equiv\pm r\pmod{r+s}
    \end{cases}
\end{equation}
which we denote by $A\cdot\ES_{r,s}(B,D)$, is elliptic.\footnote{As noted earlier, the $r=1$ case of these elliptic sequences were known to Ward, but the general case is not found in the literature, even in \cite{Swart}, where generalised elliptic sequences without the $h_1=\pm1$ requirement are considered.}
\end{Theorem}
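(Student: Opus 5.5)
The plan is a direct verification; no reduction to a universal case is needed. First I will put the sequence in a uniform shape. Set $N:=r+s$. Since $r$ and $s$ have opposite parity, $N$ is odd, and since $0<r<s$ we have $r\not\equiv 0$ and $r\not\equiv -r\pmod N$. Hence every $m\equiv\pm r\pmod N$ can be written uniquely as $m=kN+\varepsilon r$ with $k\in\Z$ and $\varepsilon\in\{\pm1\}$. Comparing with the two cases in the statement, I expect the closed form
\[ h_{kN+\varepsilon r}=\varepsilon A\,B^{(k^2-\varepsilon k)/2}D^{(k^2+\varepsilon k)/2}. \]
From this one reads off $h_0=0$ and $h_{-m}=-h_m$, because $-(kN+\varepsilon r)=(-k)N+(-\varepsilon)r$. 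By the discussion at the start of \S\ref{implication}, it therefore suffices to verify $E(a,b,c,d)$ for all $a,b,c,d\in\Z$. For such a $\Z$-indexed odd sequence, permuting the four parameters permutes the three terms up to sign, so I am free to reorder the parameters.

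Next I will locate the nonzero terms. A product $h_{x+y}h_{x-y}$ can be nonzero only if $x+y\equiv\pm r$ and $x-y\equiv\pm r\pmod N$. Because $2$ is invertible modulo the odd number $N$, this forces one of $x,y$ to be $\equiv 0$ and the other to be $\equiv\pm r$. A term of $E(a,b,c,d)$ corresponds to splitting $\{a,b,c,d\}$ into two pairs, and it can be nonzero only if each pair consists of one ``zero-type'' parameter ($\equiv 0$) and one ``$r$-type'' parameter ($\equiv\pm r$). So unless exactly two of $a,b,c,d$ are zero-type and the other two are $r$-type, all three terms vanish and $E(a,b,c,d)$ holds trivially. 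In the remaining case, the partition that pairs the two zero-type parameters together gives a vanishing term. After a permutation I may assume $a\equiv c\equiv 0$ and $b,d\equiv\pm r$, and then $E(a,b,c,d)$ reduces to the two-term identity
\[ h_{a+b}h_{a-b}h_{c+d}h_{c-d}=-h_{b+c}h_{b-c}h_{a+d}h_{a-d}. \]

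The heart of the argument is a product formula for a single pair. Take $x=\alpha N$ and $y=\beta N+\varepsilon r$. The closed form gives
\[ h_{x+y}h_{x-y}=-A^2B^{\alpha^2+\beta^2-\varepsilon\beta}D^{\alpha^2+\beta^2+\varepsilon\beta}, \]
where the overall sign $\varepsilon\cdot(-\varepsilon)=-1$ comes from $x+y=(\alpha+\beta)N+\varepsilon r$ and $x-y=(\alpha-\beta)N-\varepsilon r$. The exponents follow from the parallelogram law, since the linear parts $\mp\varepsilon(\alpha\pm\beta)$ cancel in $\alpha$. The right-hand side factors as $\phi(\alpha)\psi(\beta,\varepsilon)$, a part depending only on the zero-type parameter times a part depending only on the $r$-type parameter. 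Consequently both surviving terms are products of the same four factors $\phi(\alpha_a)\phi(\alpha_c)\psi(b)\psi(d)$, up to sign. Whenever a pair appears with the $r$-type index first, as in $h_{b+c}h_{b-c}$, I will use $h_{b-c}=-h_{c-b}$ to flip it to zero-type-first form, which costs one sign.

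I expect the main obstacle to be the sign bookkeeping, both in this flip and in the sign picked up by the permutation that moves the parameters to $a\equiv c\equiv 0$. I would handle it by staying in the Stange form $EN(p,q,r,s)$, or by checking the one normalized configuration explicitly with its sign. In the normalized configuration the left side contributes $(-1)^2=+1$ times the product, and the right side contributes $-(-1)\cdot(-1)^2=-1$ times the product after one flip. Since both sides carry the same $A^4$ and the same monomial in $B,D$, they would then agree, and this identity holds over any commutative ring $R$.
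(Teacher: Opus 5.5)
Your proposal is correct and is essentially the paper's proof: use $h_0=0$ and $h_{-n}=-h_n$ to permute parameters, note that a nonzero term forces two parameters $\equiv 0$ and two $\equiv \pm r \pmod{r+s}$ so one term vanishes, and check that the remaining two terms give the same monomial. The paper does this final check case by case over $(\sigma_a,\sigma_b)$, obtaining $A^4B^{K-k}D^{K+k}$; your pair formula $h_{x+y}h_{x-y}=-A^2B^{\alpha^2+\beta^2-\varepsilon\beta}D^{\alpha^2+\beta^2+\varepsilon\beta}$ organizes it more cleanly. One arithmetic slip in your last line: $-(-1)\cdot(-1)^2=+1$, not $-1$, and $+1$ is exactly what matches the left side, so your conclusion stands.
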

\begin{proof}
     This sequence clearly satisfies $h_0=0$ and $h_{-n}=-h_n$, so we can arbitrarily permute $a,b,c,d$ without affecting the truth of $E(a,b,c,d)$. If all three terms in $E(a,b,c,d)$ are zero then there is nothing to prove, so let us assume without loss of generality that the second term is not zero, so $a\pm c, b\pm d\equiv \pm r\pmod{r+s}$. Writing $a+c=k_+(r+s)\pm r$ and $a-c=k_-(r+s)\pm r$, we get the solutions $a=\frac{k_+ + k_-}2 (r+s)\pm r$ and $c=\frac{k_+ -k_-}2 (r+s)$, or $a=\frac{k_+ + k_-}2 (r+s)$ and $c=\frac{k_+ -k_-}2 (r+s)\pm r$. Since $r+s$ is odd and $a$ and $c$ are integers, $k_+$ and $k_-$ must have the same parity. Writing $k_a=\frac{k_+ + k_-}2$ and $k_c=\frac{k_+ -k_-}2$ and swapping $a$ and $c$ if necessary, we have $a=k_a(r+s)+\sigma_a r$, $c=k_c(r+s)$ and similarly $b=k_b(r+s)+\sigma_b r$ and $d=k_d(r+s)$, where $\sigma_a,\sigma_b=\pm1$. $E(a,b,c,d)$ then becomes
\begin{align*}
    & h_{(k_a+k_b)(r+s)+(\sigma_a+\sigma_b) r}h_{(k_a-k_b)(r+s)+(\sigma_a-\sigma_b) r}h_{(k_c+k_d)(r+s)}h_{(k_c-k_d)(r+s)} \\
    =& h_{(k_a+k_c)(r+s)+\sigma_a r}h_{(k_a-k_c)(r+s)+\sigma_a r}h_{(k_b+k_d)(r+s)+\sigma_b r}h_{(k_b-k_d)(r+s)+\sigma_b r} \\
    - & h_{(k_a+k_d)(r+s)+\sigma_a r}h_{(k_a-k_d)(r+s)+\sigma_a r}h_{(k_b+k_c)(r+s)+\sigma_b r}h_{(k_b-k_c)(r+s)+\sigma_b r}
\end{align*}
The first term is always zero because $h_m=0$ for $m\equiv 0\pmod{r+s}$. For the four choices of $(\sigma_a,\sigma_b)$, we can verify that both terms on the right-hand side always evaluate to $A^4 B^{K-k}D^{K+k}$, where $K:=k_a^2+k_b^2+k_c^2+k_d^2$ and $k:=\sigma_a k_a+\sigma_b k_b$, thereby verifying the elliptic relations.
\end{proof}

Before we return to case (0), we first study dilation and contraction of elliptic sequences. If $\ell\in\Z^+$ and $(h_n)_{n>0}$ satisfies $E(\ell a,\ell b,\ell c,\ell d)$, then the \textbf{$\ell$-contraction} $h^{[\ell]}$ of $h$, defined by $h^{[\ell]}_n:=h_{n\ell}$, satisfies $E(a,b,c,d)$. In particular, if $h$ is elliptic then so is $h^{[\ell]}$, and if $h$ satisfies all $E(m,n,r,0)$ then so does $h^{[\ell]}$. If $\ell$ is odd and $(h_n)_{n>0}$ is elliptic, then so does the \textbf{$\ell$-dilation} $h^{[1/\ell]}$ of $h$, which is defined by $h^{[1/\ell]}_{n\ell}:=h_n$ and $h^{[1/\ell]}_n:=0$ if $\ell\nmid n$. This is because if any of the three terms in $E(a,b,c,d)$ is nonzero when applied to $h^{[1/\ell]}$, say the first term, then $\ell\mid a\pm b$ and $\ell\mid c\pm d$, which implies that $\ell$ divides $2a,2b,2c,2d$ and therefore divides $a,b,c,d$ since $\ell$ is odd, so $E(a,b,c,d)$ for $h^{[1/\ell]}$ follows from $E(a/\ell,b/\ell,c/\ell,d/\ell)$ for $h$. If all three terms in $E(a,b,c,d)$ are zero then of course it holds.

Let us now focus on sequences of the form $h=H^{[1/2]}$, in which all odd terms vanish. In order for such a sequence to satisfy all $E(m,n,r,0)$, firstly the even subsequence $h^{[2]}=H$ need to satisfy all $E(m,n,r,0)$ (since $h$ satisfies $E(2m,2n,2r,0)$),
and secondly either all even terms of $h^{[2]}$ vanish or all odd terms vanish: $E(m,n,r,0)$ with e.g. $m$ even and $n,r$ odd gives rise to $h_{n+r}h_{n-r}h_m^2=0$, so if there is at least one nonzero even term $h_m$, then two nonzero terms of $h$ cannot be twice an odd number ($2r$) of indices apart, so two nonzero terms of $h^{[2]}$ cannot be an odd number of indices apart. Iterating this argument, if $h$ is not identically zero, it must be of the form $H^{[1/2^k]}$ for some $k$ and some sequence $(H_n)_{n>0}$ satisfying $E(m,n,r,0)$ all whose nonzero terms have odd indices.

Conversely, if $H$ satisfies all $E(a,b,c,d)$ and either all its even terms or all its odd terms are zero, we show that $h := H^{[1/2]}$ also satisfies all $E(a,b,c,d)$. If $a,b,c,d\in\Z$ are all even or all odd, then every term in $E(a,b,c,d)$ is the product of four even term factors, and $E(a,b,c,d)$ for $h$ is equivalent to $E(a/2,b/2,c/2,d/2)$ for $H$. If the number of even parameters is 1 or 3, then every term of $E(a,b,c,d)$ has two even term factors and two odd term factors and therefore vanishes. If the number of even parameters is 2, then one of the terms has four even term factors and two of them are $2\cdot\text{odd}$ apart (and therefore their product vanishes), while the other two terms has four odd term factors (which vanish).

Now we are ready to return to case (0) to classify sequences satisfying all $E(m,n,r,0)$.
Sequences with at most one odd term and at most one even term have been shown to be elliptic, so assume there are at least two terms in the same parity class, and call a pair of distinct positive integers $(k,l)$ of the same parity a \textbf{nonzero pair} if $h_kh_l\ne0$.
Let $(r,r+2n)$ be the lexicographically smallest nonzero pair. If $r > n$, then $E(r+n,r,n,0):h_{n+2r}h_n^3=h_{r+2n}h_r^3-h_{r+n}^3h_{r-n}$ implies that at least one of $(n,n+2r)$ and $(r-n,r+n)$ is a smaller nonzero pair, so we must have $r\le n$. 

If $r=n$, first suppose that all nonzero terms are in the same parity class (as $r$), and that $r$ is odd, so all even terms are zero (if $r$ is even, we can write $r=2^k r'$ with $r'$ odd, so by discussions above $h=H^{[1/2^k]}$ for some $H$ satisfying all $E(m,n,r,0)$ with all even terms zero, and we consider $H$ instead). 
Then the recurrence $E(2r+i,r+i,r,0):h_{3r+2i}h_r^3=h_{3r+i}h_{r+i}^3-h_{2r+i}^3h_i$ uniquely determines $h_{3r+2i}$ in terms of previous terms. 
Since all terms up to $h_{3r}$ are known (there is no nonzero odd term except $h_r$ by minimality of $(r,r+2n)=(r,3r)$), by induction $h$ coincides with $h_r\EDS(0,h_{3r}/h_r,0)^{[1/r]}$.

Next assume that $r=n$ and there exists some $s$ of parity opposite to $r$ with $h_s\ne0$, and we choose $s$ to be minimal; we show that it must be the case that $s=2r$, and therefore $r$ is odd, $h_{2r}$ is the first nonzero even term, and $h_r$ is the first nonzero term: suppose $s\ne 2r$, then $E(2r,r,s,0)$ (with the 
parameters reordered to be antitone) implies that either $(\abs{2r-s},2r+s)$ is a nonzero pair, or $(\abs{r-s},r+s)$ is a nonzero pair and 
$h_{2r}\ne0$. But the former case cannot happen, because $s\le \abs{2r-s}$ by minimality of $s$, which implies that $s<r$ and therefore $(s,2r-s)$ is a smaller nonzero pair, contradicting the choice of $(r,r+2n)$. So $(\abs{r-s},r+s)$ is a nonzero pair and we must have $r\le\abs{r-s}$, which implies that $2r\le s$. Moreover, $h_{2r}\ne0$, so if $r$ is even, then $(r,2r)$ would be a smaller nonzero pair than $(r,3r)$, while if $r$ is odd then $s=2r$ by minimality of $s$, leading to a contradiction in both cases.

In general, if $h_k$ is the first nonzero term of a sequence satisfying all $E(m,n,r,0)$, then every nonzero pair $(c,d)$ satisfies $d\ge c+2k$; we say the gap of a nonzero pair must be at least $2k$. This is because if $i < k$, then we can show by induction that $h_{c+2i}h_c=0$ for all $c$: if $c \le i$ then $c < k$ so $h_c=0$, and for $c > i$ we use $E(c+i,c,i,0): h_{2c+i}h_i^3=h_{c+2i}h_c^3-h_{c+i}^3h_{c-i}$: the first term is zero since $h_i=0$, and $h_{c+i}h_{c-i}=0$ by induction hypothesis, so we conclude that $h_{c+2i}h_c=0$. In the current situation we have that $h_r,h_{2r}$ and $h_{3r}$ are the only nonzero terms up to $3r$, so we can conclude they are also the only nonzero terms below $2r+2r=4r$. Applying the identity principle Proposition \ref{even_odd_ext} with $s=2r$ we conclude that $h=h_r\EDS(h_{2r}/h_r,h_{3r}/h_r,h_{4r}/h_{2r})^{[1/r]}$.

Having dealt with the $r=n$ case, we now turn to the $r<n$ case. $E(r+n,n,r,0)$ implies that at least one of $(n-r,n+r)$ and $(n,n+2r)$ is a nonzero pair, but $(n-r,n+r)$ cannot be a nonzero pair, so $(n,n+2r)$ must be. To show this we study two cases separately: if $n-r\le r$, since $n+r<r+2n$ always holds, $(n-r,n+r)$ cannot be a nonzero pair by minimality of $(r,r+2n)$. If $n-r>r$, assume for contradiction that $(n-r,n+r)$ is a nonzero pair. Then $n-r$ must have parity opposite to $r$, since otherwise $(r,n-r)$ would be a smaller nonzero pair. There cannot be any nonzero term $h_s$ below $r$ in the parity class of $n-r$, since otherwise $(s,n-r)$ would be a smaller nonzero pair; therefore $h_r$ is the first nonzero term. By induction we can now show that $h_{k+2r}h_k=0$ for $k<n$: for $k<r$ this is true since $h_k=0$; for $k=r$ this is true since $h_{3r}=0$ (since $3r<r+2n$); for $k>r$ we use $E(k+r,k,r,0):h_{2k+r}h_r^3=h_{k+2r}h_k^3-h_{k+r}^3h_{k-r}$: we have $h_{2k+r}=0$ because $2k+r<r+2n$ and $h_{k+r}h_{k-r}=0$ by induction hypothesis, from which we conclude $h_{k+2r}h_k=0$. Taking $k=n-r$ we see that $h_{n+r}h_{n-r}=0$, contradicting the assumption that $(n-r,n+r)$ is a nonzero pair.

Now that we know that $(n,n+2r)$ is a nonzero pair, $n$ must have parity opposite to $r$, since otherwise $(r,n)$ would be a smaller nonzero pair.
Moreover, $h_n$ is the first nonzero term in the parity class of $n$ (opposite to the parity of $r$): suppose not, and let $s<n$ be the first nonzero term in the parity class. Then $r<s$ (otherwise $(s,n)$ is a smaller nonzero pair than $(r,r+2n)$), so $h_r$ is the first nonzero term of $h$, and the gap of a nonzero pair must be at least $2r$, so $s+2r\le n$, and the only nonzero term in the parity class of $s$ below $s+2r$ is $h_s$. Of course the only nonzero term in the parity class of $r$ up to $3r<r+2n$ is $h_r$ by minimality of $(r,r+2n)$, so by the identity principle Proposition \ref{even_odd_ext} again, we conclude that $h=h_r\ES_{r,s}(-h_s/h_r,h_{s+2r}/h_r)$. If $h_{s+2r}=0$, the only nonzero terms of $h$ are $h_r$ and $h_s$, contradicting $h_n\ne0$, while if $h_{s+2r}\ne0$, then $h_{r+2s}\ne0$, contradicting minimality of $(r,r+2n)$.

Now we are reduced to the situation that $r<n$ are of opposite parity, $h_r$ and $h_{r+2n}$ are the first two nonzero terms in the parity class of $r$, and $h_n$ is the first nonzero term in its parity class. Since the gap of a nonzero pair is at least $2r$, $h_n$ is the only nonzero term below $n+2r$ in its parity class. Applying Proposition \ref{even_odd_ext} again, we conclude that $h=h_r\ES_{r,n}(-h_n/h_r,h_{n+2r}/h_r)$.

In summary, we have
\begin{Theorem} \label{classification_thm}
    Let $(h_n)_{n>0}$ be a sequence over a field $R$ satisfying all $E(m,n,r,0)$. Then $h$ falls in one of the following cases:
    \begin{enumerate}[(i)]
    \item there exist an odd $r\in\Z^+$ and $A,B,C,D\in R$ such that $h=A\cdot\EDS(B,C,D)^{[1/r]}$,
    \item there exist $r,s\in\Z^+$ of opposite parity with $r<s\ne 2r$ and $A,B,D\in R$ such that $h=A\cdot\ES_{r,s}(B,D)$,
    \item there exist an even $r\in\Z^+$ and $A,C\in R$ such that $h=A\cdot\EDS(0,C,0)^{[1/r]}$,
    \end{enumerate}
    and all of these sequences are elliptic.
\end{Theorem}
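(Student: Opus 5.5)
The theorem is stated ``in summary'': the case analysis in the preceding discussion already does most of the work. The proof is therefore an assembly of that analysis, followed by a check that each listed family is elliptic. I would organize it as follows.

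\textbf{Step 1: case (11) and $h_1\ne0$ in general.} Suppose first that $h_1\ne0$. Then Corollary \ref{ell_rel_from_one_zero} applies, since $E(m,n,1,0)$ is among the $E(m,n,r,0)$, and it shows that $h$ is elliptic.
\begin{itemize}
\item If $h_2\ne0$, Theorem \ref{EDS_standard} gives $h=h_1\EDS(h_2/h_1,h_3/h_1,h_4/h_2)$. This is case (i) with $r=1$.
\item In case (101), the discussion gives $h=h_1\EDS(0,h_3/h_1,0)$, again case (i) with $r=1$.
\item In case (100), $h=h_1\ES_{1,\ell-1}(B,D)$ with $\ell$ odd. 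If $\ell-1\ne2$ this is case (ii). If $\ell=3$ it is $h_1\EDS(h_2/h_1,0,h_4/h_2)$, or a degenerate sequence with at most one nonzero term of each parity, which also falls in (i) with $r=1$.
\end{itemize}

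\textbf{Step 2: case (0), $h_1=0$.} If $h$ is identically zero, it is $0\cdot\EDS$, which is case (i). If $h$ has at most one nonzero term of each parity, it can be written as $A\cdot\ES_{r,s}(B,0)$, with $D=0$ and the nonzero indices playing the roles of $r$ and $s$. I need to check that this lands in a listed form: when $s=2r$ it is $A\cdot\EDS(B,0,0)^{[1/r]}$ with $r$ odd, and when $r$ is even with the other class empty it is case (iii).

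Otherwise I take the lexicographically minimal nonzero pair $(r,r+2n)$. The argument above shows $r\le n$, and then splits as follows.
\begin{itemize}
\item $r=n$ with all nonzero terms in one parity class. The argument gives $h_r\EDS(0,C,0)^{[1/r]}$, after passing through $H^{[1/2^k]}$ when $r$ is even. This yields (i) for odd $r$ and (iii) for even $r$. One detail to track: the $2^k$-dilation of an odd-$r'$ dilation is the $r$-dilation of $\EDS(0,C,0)$, because $\EDS(0,C,0)$ vanishes at even indices.
\item $r=n$ with a nonzero term of the other parity. Then $s=2r$, $r$ is odd, and $h=h_r\EDS(\cdot)^{[1/r]}$, which is (i).
\item $r<n$. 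Then $h=h_r\ES_{r,n}(\cdot)$. If $n\ne2r$ this is (ii). If $n=2r$, the minimality argument rules it out, or else it coincides with an $\EDS^{[1/r]}$, which is (i).
\end{itemize}

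\textbf{Step 3: ellipticity of each family.}
\begin{itemize}
\item Standard EDSs are elliptic by Theorem \ref{std_EDS_is_EDS}. Multiplying by $A$ preserves this by homogeneity, and the odd $r$-dilation preserves it by the dilation lemma above. This covers (i).
\item Family (ii) is elliptic by the preceding theorem on $A\cdot\ES_{r,s}(B,D)$.
\item For (iii), write $r=2^k r'$ with $r'$ odd. Then $\EDS(0,C,0)^{[1/r']}$ is elliptic and all its even terms vanish. Applying the converse $H^{[1/2]}$ argument $k$ times keeps it elliptic, since after the first dilation all odd terms vanish, so the hypothesis is preserved at each stage.
\end{itemize}

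\textbf{Main obstacle.} I expect the main difficulty to be the bookkeeping in Step 2. Every branch of the preceding case analysis, including the degenerate sequences with at most two nonzero terms, must be placed in exactly one of (i)--(iii). In particular, the excluded value $s=2r$ in (ii) must be absorbed into (i), using that $\ES_{r,2r}$ coincides with $\EDS(B,0,D')^{[1/r]}$ for suitable parameters. I would verify this coincidence via Proposition \ref{even_odd_ext} by comparing initial terms.
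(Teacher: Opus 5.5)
Your proposal is correct and is essentially the paper's own argument. The theorem is the summary of the case analysis (11), (101), (100) and, for $h_1=0$, of the lexicographically minimal nonzero pair $(r,r+2n)$, together with the ellipticity of odd dilations of standard EDSs, of $A\cdot\ES_{r,s}(B,D)$, and of iterated $H^{[1/2]}$-dilations.

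One correction concerns the branch $r<n$, $n=2r$. Minimality does \emph{not} rule this case out: for example, $\EDS(B,0,D)^{[1/3]}$ with $BD\ne0$ has minimal nonzero pair $(3,15)$. So you genuinely need the identity $A\cdot\ES_{r,2r}(B,D)=A\cdot\EDS(-B,0,-D/B)^{[1/r]}$. It holds there because $B=-h_{2r}/h_r\ne0$. For $B=0$ the identity fails, and such a sequence lies in (ii) as $\ES_{r,4r}(-D,0)$.
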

By Theorem \ref{neg_ext}, $\Z$-indexed sequences satisfying all $E(m,n,r,0)$ also must be one of these three forms.
We require $s\ne 2r$ in (ii) and $r$ be even in (iii) in order for the cases not to be subsumed by (i). For each odd $r$, the closure of the set of type (i) sequences form a 4-dimensional rational irreducible component in the algebraic set of elliptic sequences over the field. We have not determined whether the closure could contain all type (ii) or type (iii) sequences of some suitable parameter, so we are not sure whether type (ii) and type (iii) sequences also form irreducible components. 
The coordinate ring of the whole algebraic set is not finite type, since there are infinitely many connected components. Even though finite-dimensional, it looks plausible that the coordinate rings of the irreducible components are also not of finite type. For type (ii) and (iii) sequences, the coordinate rings are isomorphic to $R[AB^{k(k+1)/2}D^{k(k-1)/2}]_{k\in\Z}\subset R[A,B,D]$ and $R[AC^{n(n+1)/2}]_{n\in\N}\subset R[A,C]$ respectively, which can be shown not to be of finite type (the former is not finite type even after specializing to $A=1$).

Finally, we analyze type (11) sequences satisfying the Somos 4 recurrence $E(n,2,1,0)$.

\subsection{Sequences $(h_n)_{n>0}$ satisfying $E(n,2,1,0)$ with $h_2 h_1\ne0$ and $h_3=0$} Let us call these sequences type (110) Somos sequences. $E(n,2,1,0)$ says $h_{n+2}h_{n-2}h_1^2=h_{n+1}h_{n-1}h_2^2$ in this case: in other words, whenever $(n+1,n-1)$ is a nonzero pair, $(n+2,n-2)$ must also be. If we use a binary string $ABCD$ to represent whether each of $h_{n-2},h_{n-1},h_n,h_{n+1}$ is zero (e.g. $A=0$ means that $h_{n-2}=0$ and $A=1$ means that $h_{n-2}\ne0$), then it can be extended to $ABCDE$ if and only if $A=1$ (in which case $E=B\cdot D$) or $A=0$ and $B\cdot D=0$ (in which case $E=0$ and $E=1$ are both valid extensions, and $E=1$ leads to a new free parameter), and if $ABCD$ extends to $ABCDE$ we say that $ABCD\to BCDE$ is a valid transition. By assumption the initial state is either 1100 or 1101. We can draw the following diagram of valid transitions:
\begin{figure}[H]
    \centering
\begin{tikzpicture}[
  every node/.style={
    draw, circle, minimum size=1.1cm, font=\ttfamily\small
  },
  every path/.style={
    -{Stealth[length=7pt]}, thick
  }
]

\node (1110) at (0,    0)    {1110};
\node (1101) at (0,   -2.8)  {\underline{1101}};
\node (1011) at (0,   -5.6)  {1011};
 
\node (1100) at (2.2,  0)    {\underline{1100}};
\node (0110) at (2.2, -2.8)  {0110};
\node (0011) at (2.2, -5.6)  {0011};
 
\node (1000) at (4.4,  0)    {1000};
\node (0000) at (4.4, -2.8)  {0000};
\node (0001) at (4.4, -5.6)  {0001};
 
\node (0100) at (6.6, -2.8)  {0100};
\node (0010) at (6.6, -5.6)  {0010};
 
\node (1001) at (8.8, -2.8)  {1001};
\node (1010) at (8.8, -5.6)  {1010};
 
 
\draw (1110) -- (1100);
 
\draw (1101) -- (1011);
\draw (1011) -- (0110);
\draw (0110) -- (1101);
 
\draw (0011) -- (0110);
\draw (0110) -- (1100);
\draw (1100) -- (1000);
\draw (1000) -- (0000);
\draw (0000) -- (0001);
\draw (0001) -- (0011);
 
\draw (0001) -- (0010);
\draw (0010) -- (0100);
\draw (0100) -- (1000);
 
\draw (0100) -- (1001);
\draw (1001) -- (0010);
 
\draw (1010) -- (0010);
 
\draw (0000) edge[loop right] (0000);
 
\end{tikzpicture}
\end{figure}

Notice that there are five loops in the diagram: 0, 001 (i.e. $0010\to 0100\to 1001\to 0010$), 011, 00001, and 000011. Repeating these loops produces families of sequences with infinitely many parameters: 001, 011 and 00001 produces one free parameter per period, while 000011 produces two. An infinite binary string whose length 5 substrings all represent valid transitions should correspond to an irreducible component in the space of type (11) Somos sequences, if changing any nonzero number of 0 in the string to 1 produces an invalid string. Such a component has infinite dimension whenever the string contains infinitely many 1.

The two nodes 1010 and 1110 without incoming arrows are not reachable; 1111 is also unreachable with a loop to itself and not drawn. 0101 and 0111 are dead ends (patterns that do not appear in any type (11) Somos sequence) and not drawn on the diagram; if they are drawn then there should be an arrow from 0010 to 0101 and an arrow from 0011 to 0111. 

\subsection{Sequences $(h_n)_{n>0}$ satisfying $E(n,2,1,0)$ with $h_3 h_2 h_1\ne0$} Let use call these strings type (111) Somos sequences. $E(n,2,1,0)$ says $h_{n+2}h_{n-2}=(h_2/h_1)^2 h_{n+1}h_{n-1}-(h_3/h_1)h_n^2$ in this case.
If $h':=h_1\EDS(h_2/h_1,h_3/h_1,h_4/h_2)$ consists purely of nonzero terms then the Somos 4 recurrence uniquely determines $h$ and $h=h'$, but $h'$ may well have zero terms. If this happens, let $h'_n$ be the first zero term, so $n\ge 4$. We still have $h_m=h'_m$ for $m\le n+3$, but $h_{n+4}$ can be arbitrary. The condition $h'_n=0$ (and $h'_m\ne0$ for $0<m<n$) defines a hypersurface,
over which the main irreducible component (4-dimensional, parameterized by $h_1,h_2,h_3,h_4$) ``branches off" another component (or multiple components), which is still 4-dimensional with the introduction of the new free variable $h_{n+4}$.

An intriguing observation is that this ``branching off" process is self-similar in a sense: equations for the branch-off hypersurfaces of the main component and of a branched off component are related by a simple transformation, which we now derive. Firstly, using $h_{n-1}\ne0$, $h_n=0$ and $E(m,2,1,0)$ for $m=n-1,n,n+1$ we iteratively prove that $h_{n+1},h_{n+2},h_{n+3}$ are all nonzero. We can then write $h_{n+1}=r_1 h_1$ and $h_{n+2}=r_1 r_2 h_2$, and then $E(n+2,2,1,0)$ gives rise to the identity $h_{n+3}=r_1 r_2^2 h_3$. If we define a sequence $(h^{[n+]}_m)_{m>0}$ by $h^{[n+]}_m=h_{n+m}/r_1 r_2^{m-1}$, then the first four terms of $h^{[n+]}$ agrees with $h_1\EDS(h_2/h_1,h_3/h_1,h^{[n+]}_4/h_2)$, and both sequences satisfy all $E(m,2,1,0)$ ($h^{[n+]}$ satisfies them because $h$ satisfies $E(n+m,2,1,0)$). Therefore, $h_{n+m}=r_1 r_2^{m-1} h_1\EDS(h_2/h_1,h_3/h_1,h_{n+4}/r_1 r_2^3 h_2)$ as long as $h_{n+k}\ne0$ for $0<k\le m-4$. We therefore see that if the $m$th branch-off hypersurface is defined by the equation $F(h_1,h_2,h_3,h_4)=0$, then the $m$th branch-off hypersurface of a branched off component at index $n$ is defined by $F(h_1,h_2,h_3,h_{n+4}/r_1 r_2^3)=0$ (notice that $r_1$ and $r_2$ are rational functions in $h_1,h_2,h_3,h_4$ and therefore algebraic functions in $h_1,h_2,h_3$ on the branched off component). A very similar phenomenon has been observed by Ward (the ``symmetry formula"), see \cite{Swart}, Theorem 4.7.6. 

Branching off at index $n$ is only possible if $h'_n=0$ does not imply some $h'_m$ with $m<n$ must be zero. This is in other words saying that $h'_n$ (as a polynomial in $h_1,h_2/h_1,h_3/h_1,h_4/h_2$) has a primitive divisor, or that the Zsigmondy set of the universal EDS over the field 
does not contain $n$. We are not sure whether this Zsigmondy set is always empty, or whether it is the same for every field. See \cite{IS, EMR} for results on Zsigmondy set of EDSs.

\subsection{The Scheme of Elliptic Sequences} Instead of fixing a field, we may also consider the scheme of ($\Z^+$-indexed) elliptic sequences, defined as the prime spectrum of the universal ring $\Z[h_n]_{n>0}/\langle E(a,b,c,d) \rangle_{a>b>c>d\ge0}$, where each $h_n$ is now a indeterminate in the polynomial ring. Elliptic sequences over a commutative ring $R$ correspond to ring homomorphisms from the universal ring to $R$, which in turn correspond to $R$-points of the scheme. 
Our study of field-valued points determines the reduced induced structure of the scheme: type (i) sequences give rise to 5-dimensional irreducible components in the scheme, and type (ii) and (iii) sequences might give rise to lower-dimensional ones. 

An interesting question is whether the scheme (or the universal ring) is reduced, or equivalently, whether the ideal $\langle E(a,b,c,d) \rangle_{a>b>c>d\ge0}$ is radical. It is not quite amenable to computational approaches due to infinitely many indeterminates and relations: Macaulay2 readily shows that $\Z[h_n]_{0<n\le 6}/\langle E(3,2,1,0),E(4,2,1,0) \rangle$ is not reduced, but the explicit square-zero element it gives becomes zero once we add in $h_7$ and associated relations, as it equals $h_2 h_3 E(4,3,1,0)-h_1 h_2 E(5,2,1,0)$\footnote{Note that $E(5,2,1,0)=E(4,3,2,1)$.}. If it is not reduced, it would be interesting to determine whether each irreducible component is also not reduced. If it is reduced, we can still ask whether $\Z[h_n]_{n>0}/\langle E(m,n,r,0)\rangle_{m>n>r>0}$ is reduced: our classification of elliptic sequences show that the collection of all $E(m,n,r,0)$ implies all $E(a,b,c,d)$ over fields, so the radical of the ideals $\langle E(m,n,r,0)\rangle_{m>n>r>0}$ and $\langle E(a,b,c,d)\rangle_{a>b>c>d\ge0}$ are the same. Macaulay2 shows that $E(5,3,2,1)$ (the first elliptic relation whose terms consist of four distinct factors) is nonzero in $\Z[h_n]_{0<n\le 8}/\langle E(m,n,r,0)\rangle_{m>n>r>0, m+n\le8}$ while its cube is zero, but once we add $h_9, h_{10}$ and associated relations, then its square is already zero; adding more terms/relations might make itself zero, but $h_{11}$ and $h_{12}$ are not enough. 
Results about elliptic sequences modulo prime powers in e.g. \S4.7 and \S5 
of \cite{Swart} could potentially shed light on these questions.

If we consider the universal ring $\Z[h_n]_{n\in\Z}/\langle E(a,b,c,d)\rangle_{a,b,c,d\in\Z}$ for $\Z$-indexed elliptic sequences, it is indeed not reduced, and $h_0$ is a nonzero nilpotent element: we have $h_0^4=0$ because of $E(0,0,0,0)$, but $h_0\ne0$ in the universal ring, because there exists an elliptic sequence with $h_0\ne0$: take any $R$ with an element $r\ne0$ with $r^4=0$ and define $h_n=r$ for all $n\in\Z$.

\section{Acknowledgements} 

We thank David Kurniadi Angdinata for bringing up the division polynomial problem that motivated this work, and for suggestions to improve the paper.
We thank Kevin Buzzard, Antoine Chambert-Loir, and Jinzhao Pan for helpful discussions on the Lean Zulip server.

\end{document}